\documentclass[twocolumn,superscriptaddress,prx,showkeys]{revtex4}

\usepackage{adjustbox}
\usepackage{amssymb}
\usepackage{bm}
\usepackage{color}
\usepackage{makecell}
\usepackage{microtype}
\usepackage{multirow}
\usepackage{paralist}
\usepackage{rotating}
\usepackage{indentfirst}
\usepackage{float}
\usepackage{fourier}

\usepackage{hyperref}
\usepackage[all]{xy}
\usepackage{xcolor}
\usepackage{dcolumn}
\usepackage{amsmath, amsthm, amssymb,amscd, mathrsfs, amsfonts, mathtools}
\usepackage{tikz}
\usepackage{diagbox}
\usetikzlibrary{matrix,arrows,decorations.pathmorphing}
\usetikzlibrary{backgrounds}
\usetikzlibrary{shapes.geometric}
\usepackage{circuitikz}
\usepackage{tcolorbox}
\usepackage{blkarray}

\usepackage{xspace}
\usepackage{ifthen}

\setlength{\parskip}{0pt}
\setlength{\tabcolsep}{6pt}
\setlength{\arraycolsep}{2pt}


\newboolean{showcomments}
\setboolean{showcomments}{true}
\ifthenelse{\boolean{showcomments}}
{
}





\newcommand{\wrt}{w.r.t.\@\xspace}

\def\rTo{\longrightarrow}

\def\<{\langle}
\def\>{\rangle}

\newcommand{\inner}[2]{\langle{#1}\rangle_{#2}} 


\let\cal\mathcal 
\let\bb\mathbb 


\def\im{\operatorname{im}}
\def\Hom{\operatorname{Hom}}

\newcommand{\tLap}[1]{{\cal L}^{\scalebox{.6}{(1)}}_{\scriptscriptstyle {#1}}} 
\newcommand{\bLap}[1]{{\cal L}^{\scalebox{.6}{(2)}}_{\scriptscriptstyle {#1}}} 
\newcommand{\Lap}[2]{{\cal L}^{\scalebox{.6}{({#1})}}_{\scriptscriptstyle {#2}}} 

\newcommand{\mixd}[1]{d^{\scalebox{.6}{({#1})}}} 
\newcommand{\mixb}[1]{\partial^{\scalebox{.6}{({#1})}}} 
\newcommand{\hbd}[1]{\partial^{\scalebox{.6}{(1)}}_{\scriptscriptstyle {#1}}} 
\newcommand{\vbd}[1]{\partial^{\scalebox{.6}{(2)}}_{\scriptscriptstyle {#1}}} 
\newcommand{\xbd}[2]{\partial^{\scalebox{.6}{({#1})}}_{\scriptscriptstyle {#2}}} 

\newcommand{\hcobd}[1]{\delta^{\scalebox{.6}{(1)}}_{\scriptscriptstyle {#1}}} 
\newcommand{\vcobd}[1]{\delta^{\scalebox{.6}{(2)}}_{\scriptscriptstyle {#1}}} 

\setcounter{MaxMatrixCols}{20}

\def\a{\alpha}
\def\b{\beta}

\def\g{\gamma}

\def\l{\lambda}

\def\s{\sigma}

\newcommand{\bsimplex}[4]{[#1_0^1, \ldots, #1_{#2}^1; #3_0^2, \ldots, #3_{#4}^2]}

\def\sgn{\operatorname{sgn}} 

\newcommand{\DiffBicomp}[3]{{{#1}}^{{{#2}\rightarrow {#3}}}} 

\newcommand{\outAdj}[2]{\smallfrown^{\scriptscriptstyle ({#1})}_{{#2}}} 
\newcommand{\inAdj}[2]{\smallsmile_{\scriptscriptstyle ({#1})}^{{#2}}} 

\newcommand{\hH}[1]{{\rm H}^{\scriptscriptstyle {(1)}}_{\scriptscriptstyle {#1}}} 
\newcommand{\vH}[1]{{\rm H}^{\scriptscriptstyle {(2)}}_{\scriptscriptstyle {#1}}} 
\newcommand{\xH}[2]{{\rm H}^{\scriptscriptstyle {({#1})}}_{\scriptscriptstyle {#2}}} 
\newcommand{\betti}[2]{\beta^{\scriptscriptstyle ({#1})}_{\scriptscriptstyle {#2}}}
\newcommand{\bettiVect}[1]{\beta_{\scriptscriptstyle {#1}}} 


\newcommand{\vcoh}[1]{{\rm H}_{\scalebox{.6}{(2)}}^{{#1}}}


\newcommand{\har}[2]{{}^{\scalebox{.6}{\rm ({#1})}}{\cal H}_{{#2}}} 



\tikzset{1simpl/.style={->,>=stealth,thick}}
\tikzset{vertex/.style = {circle, draw, fill=red, inner sep=1.5}}


%
{\endtcolorbox}

\newtheorem{thm}{Theorem}[section]

\newtheorem{lem}[thm]{Lemma}

\theoremstyle{definition}

\theoremstyle{remark}

\allowdisplaybreaks

%
\begin{document}

\title{Topology and spectral interconnectivities of higher-order multilayer networks}

\author{Elka\"ioum M.~Moutuou}
\email{elkaioum.moutuou@concordia.ca}
\affiliation{PERFORM Centre, Concordia University, Montreal, QC, H4B 1R6}
\affiliation{Gina Cody School of Engineering and Computer Science, Concordia University, Montreal, QC, H3G 1M8}
\affiliation{Department of Electrical and Computer Engineering, Concordia University, Montreal, QC, H3G 1M8}

\author{Oba\"i B.~K.~Ali}
\affiliation{PERFORM Centre, Concordia University, Montreal, QC, H4B 1R6}
\affiliation{Gina Cody School of Engineering and Computer Science, Concordia University, Montreal, QC, H3G 1M8}
\affiliation{Department of Electrical and Computer Engineering, Concordia University, Montreal, QC, H3G 1M8}
\affiliation{Department of Physics, Concordia University, Montreal, QC, H4B 1R6}


\author{Habib Benali}
\affiliation{PERFORM Centre, Concordia University, Montreal, QC, H4B 1R6}
\affiliation{Gina Cody School of Engineering and Computer Science, Concordia University, Montreal, QC, H3G 1M8}
\affiliation{Department of Electrical and Computer Engineering, Concordia University, Montreal, QC, H3G 1M8}

\keywords{multilayer networks, homology, multicomplexes, laplacian, cross-hubs, spectral persistence} 
\begin{abstract}
Multilayer networks have permeated all the sciences as a powerful 
mathematical abstraction for interdependent heterogenous systems such as multimodal brain 
connectomes, transportation, ecological systems, and scientific collaboration.
 But describing such complex systems through a purely graph-theoretic formalism presupposes that the
 interactions that define the underlying infrastructures and support their functions 
are only pairwise-based; a strong assumption likely leading to oversimplifications. Indeed, 
most interdependent systems intrinsically involve
 higher-order intra- and inter-layer interactions. For instance, ecological systems involve
 interactions among groups within and in-between species, collaborations and citations 
 link teams of coauthors to articles and vice versa,  interactions might exist among groups 
of friends from different social networks, etc. While higher-order 
 interactions have been studied for monolayer systems through the language 
 of simplicial complexes and hypergraphs, a broad and systematic formalism incorporating them into the realm 
 of multilayer systems is still lacking.
Here, we introduce the concept of {\em crossimplicial multicomplexes} 
as a general formalism for modelling interdependent systems
 involving higher-order intra- and inter-layer connections. 
 Subsequently, we introduce {\em cross-homology} and its spectral 
 counterpart, the {\em cross-Laplacian} operators, to establish a 
 rigorous mathematical framework for quantifying global and local intra- and
  inter-layer topological structures in such systems. When applied to multilayer networks, these cross-Laplacians provide powerful methods for detecting
  clusters in one layer that are controlled by hubs in another layer. We call such hubs spectral cross-hubs (SCH) and define spectral persistence as a way to rank them according to their emergence along the cross-Laplacian spectra. 
   We illustrate our framework through 
  synthetic and empirical datasets.
 \end{abstract}

\maketitle

\section{Introduction}
{\em Multilayer networks}~\cite{boccaletti2014multilayer, de2013mathematical, kivela2014} have emerged over the last decade as a natural instrument in modelling myriads of heterogenous systems. They permeate all areas of science, as they provide a powerful abstraction of real-world phenomena made of interdependent sets of units interacting with each other through various channels. The concepts and computational methods they purvey have been the driving force to recent progress in the understanding of many highly sophisticated structures such as heterogeneous ecological systems~\cite{pilosof2017, timoteo2018}, spatiotemporal and multimodal human brain connectomes~\cite{griffa2017, pedersen2018, mandke2018}, gene-molecule-metabolite interactions~\cite{liu2020}, and interdisciplinary scientific collaborations~\cite{vasilyeva2021}. This success has led to a growing interdisciplinary research investigating fundamental properties and topological invariants in multilayer networks. 

  Some of the major challenges in the analysis of a multilayer network are to quantify the {\em importance} and {\em interdependence} among its different components and subsystems, and describe the topological structures of the underlying architecture to better grasp the dynamics and  information flows between its different network layers. Various approaches extending concepts, properties, and centrality indices from network science~\cite{newman2003structure, fortunato2016community}  have been developed, leading to tremendous results in many areas of science~\cite{boccaletti2014multilayer, flores2018, liu2020, sola2013eigenvector, sanchez2014dimensionality, timoteo2018, wu2019tensor, yuvaraj2021topological}. However, these approaches assume that inter- and intra-communications and relationships between the networks involved in such systems rely solely on node-based interactions. The resulting methods are therefore less insightful when the infrastructure is made up of higher-order {\em intra-} and {\em inter-connectivites} among node aggregations from different layers --- as it is the case for many phenomena. For example, heterogenous ecosystems are made up of interactions among groups of the same or different species, social networks often connect groups of people belonging to different circles, collaborations and citations form a higher-order multilayer network made of teams of co-authors interconnected to articles, etc. Many recent studies have explored higher-order interactions and structures in monolayer networks~\cite{giusti2015clique, benson2016higher, xu2016, benson2018simplicial, yin2018higher, iacopini2019simplicial, schaub2020, lucas2020multiorder, bianconi2021higher, ghorbanchian2021higher, young2021hypergraph, shi2021cliques, lotito2022higher, majhi2022dynamics} using different languages such as {\em simplicial complexes} and {\em hypergraphs}. But a general mathematical formalism for modelling and studying higher-order multilayer networks is still lacking.
  
\begin{figure}[!ht]
\begin{center}
\includegraphics[width=0.5\textwidth]{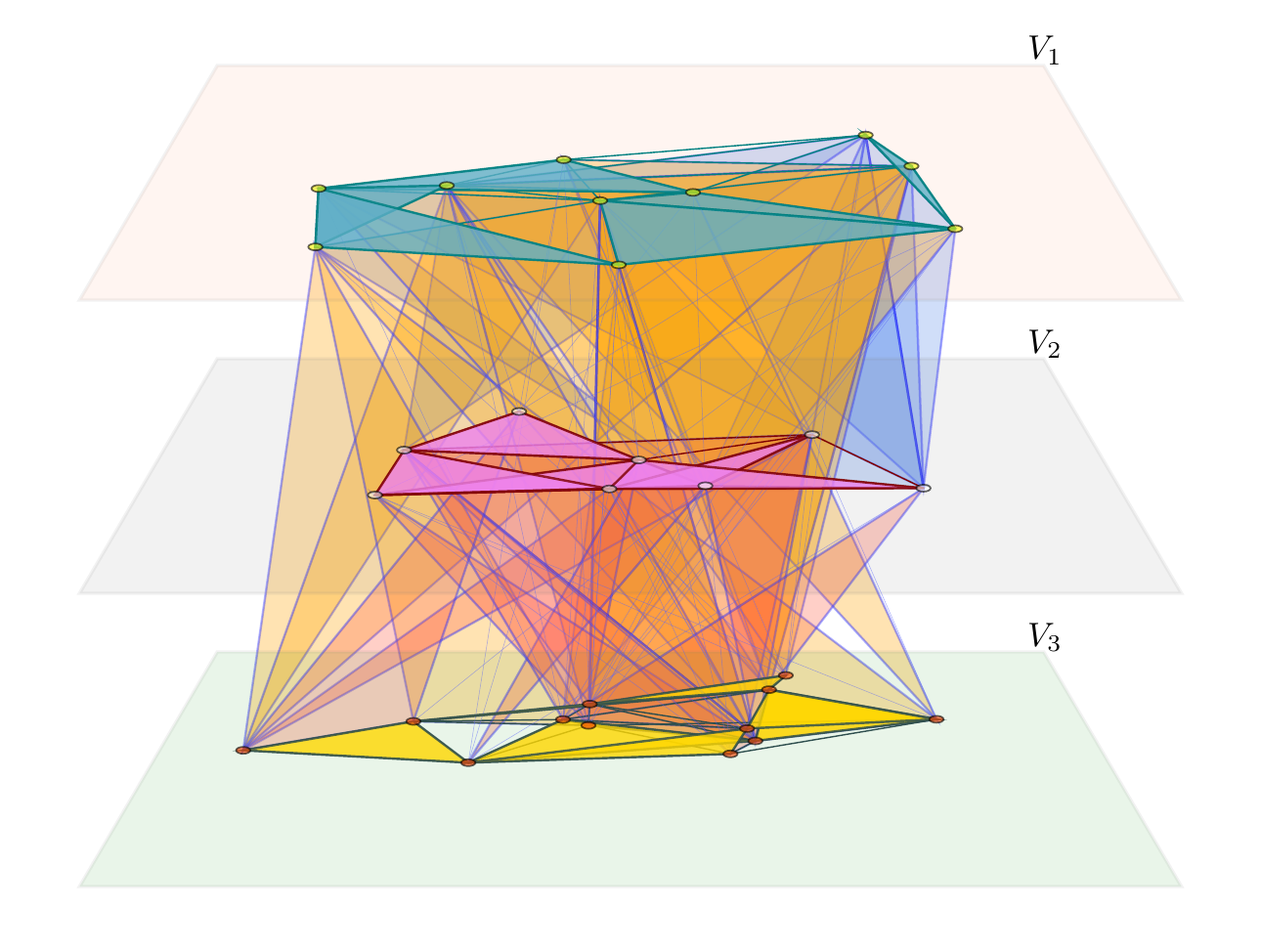} 
\end{center}
\caption{Shcematic of a 2--dimensional crossimplicial multicomplex $\cal X$ with 3 layers and 30 nodes in total; $\cal X$ consists of the vertex sets $V_1, V_2, V_3$ and the three CSBs $\cal X^{1,2}, \cal X^{1,3}, \cal X^{2,3}$ defined respectively on the products $V_1\times V_2, V_1\times V_3$, and $V_2\times V_3$. 
}
\label{fig:CSMC}
\end{figure}

  Our goal in this study is twofold. First, we propose a mathematical formalism that is rich enough to model and analyze multilayer complex systems involving higher-order connectivities within and in-between their subsystems. Second, we establish a unified framework for studying topological structures in such systems. This is done by introducing the concepts of {\em crossimplicial multicomplex}, {\em cross-homology}, {\em cross-Betti vectors}, and {\em cross-Laplacians}. Before we dive deeper into these notions, we shall give the intuition behind them by considering the simple case of an undirected two--layered network $\Gamma$; here $\Gamma$ consists of two graphs $(V_1,E_1), (V_2,E_2)$, where $V_1, V_2$ are the node sets of $\Gamma$, $E_s\subseteq V_s\times V_s, s=1,2$ are the sets of intra-layer edges, and a set $E_{1,2}\subseteq V_1\times V_2$ of {\em inter-layer edges}. Intuitively, $\Gamma$ might be seen as a system of interactions between two networks. And what that means is that the node set $V_1$ interacts not only with $V_2$ but also with the edge set $E_2$ and {\em vice versa}. Similarly, intra-layer edges in one layer interact with edges and triads in the other layer, and so on. This view suggests a more combinatorial representation by some kind of two-dimensional generalization of the fundamental notion of {\em simplicial complex} from Algebraic Topology~\cite{hatcher, lane1963homology}. The idea of {\em crossimplicial multicomplex} defined in the present work allows such a representation. In particular, when applied to a pairwise based multilayer network, this concept allows to incorporate, on the one hand, the {\em clique complexes}~\cite{lim2015hodge, schaub2020} corresponding to the network layers, and on the other, the clique complex representing the inter-layer relationships between the different layers into one single mathematical object. Morever, $\Gamma$ can be regarded through different lenses, and each view displays different kind of topological structures. The most naive perspective flattens the whole structure into a monolayer network without segregating the nodes and links from one layer or the other. Another viewpoint is of two networks with independent or interdependent topologies communicating with each other through the interlayer links. The rationale for defining cross-homology and the cross-Laplacians is to view $\Gamma$ as different systems each with its own intrinsic topology but in which nodes, links, etc., from one system have some restructuring power that allows them to impose and control additional topologies on the other. This means that   in a multilayer system, one layer network might display different topological structures depending on whether we look at it from its own point of view, from the lens of the other layers, or as a part of a whole aggregated structure. We describe this phenomenon by focusing on the spectra and eigenvectors of the lower degree cross-Laplacians. We shall however remark that our aim here is not to address a particular real-world problem but to provide broader mathematical settings that reveal and quantify the emergence of these structures in any type of multilayer network.
%

%
%
%
%
%
%

\section{Crossimplicial multicomplexes}
\paragraph*{\bf General definitions.} 
 Given two finite sets $V_1$ and $V_2$  and a pair of integers $k,l\ge -1$, a $(k,l)$--{\em crossimplex} $a$ in $V_1\times V_2$ is a subset $\{v_0^1,\ldots, v_k^1,v_0^2,\ldots, v_l^2\}$ of $V_1^{k+1}\times V_2^{l+1}$ where $v_i^s\in V_s$ for $s=1,2$. The points $v_i^1$ (resp. $v_j^2$) are the {\em vertices} of $a$ in $V_1$ (resp. $V_2$), and its {\em crossfaces} are its subsets of the form $\{v_0^1, \ldots, v_{i-1}^1, v_{i+1}^1, \ldots, v_k^1,v_0^2, \ldots, v_l^2\}$ for $0\le i \le k$ and $\{v_0^1,  \ldots, v_k^1,v_0^2, \ldots, v_{i-1}^2, v_{i+1}^2, \ldots, v_l^2\}$ for $0\le i \le l$. Note that here we have used the conventions that $V^n_1\times V_2^0 = V_1^n$ and $V_1^0\times V_2^n=V_2^n$.

\begin{figure*}[!ht]
\begin{center}
\begin{tikzpicture}[scale=0.6]

\begin{scope}[on background layer,opacity=0.5]
\fill[gray!20] (0,0) -- (4,0) -- (3,2) node[midway,above, black] {$ \ V_1$} -- (1,2) node[midway,above, black] {(a)}  -- cycle;
\fill[red!20] (0,-3) -- (4,-3) -- (3,-1)  node[midway,above, black] {$ \ V_2$} -- (1,-1) -- cycle;
\end{scope}

\node[vertex] (v001) at (2.5,1.8) {};
\node at (2.9,2) {$u_0^1$};
\node[vertex] (v011) at (2.5, 0.5) {};
\node at (2.5,0.9) {$u_1^1$};
\node[vertex] (v021) at (1,1) {};
\node at (1,1.4) {$u_2^1$};

\draw (v011) -- (v021);

\node[vertex, fill=yellow] (v002) at (3,-2.2) {};
\node at (3,-2.6) {$u_0^2$};
\node[vertex, fill=yellow] (v012) at (2,-1.5) {};
\node at (2,-1.1) {$u_1^2$};
\node[vertex, fill=yellow] (v022) at (1,-2) {};
\node at (1,-2.4) {$u_2^2$};

\draw (v002) -- (v012) -- (v022) -- (v002);
\begin{scope}[on background layer]
\fill[yellow!20, opacity=0.8] (v002.center) -- (v012.center) -- (v022.center) -- cycle;
\end{scope}

\begin{scope}[on background layer,opacity=0.5]
\fill[gray!20] (6,0) -- (10,0) -- (9,2) node[midway,above, black] {$ \ V_1$} -- (7,2) node[midway,above, black] {(b)}  -- cycle;
\fill[red!20] (6,-3) -- (10,-3) -- (9,-1)  node[midway,above, black] {$ \ V_2$} -- (7,-1) -- cycle;
\end{scope}
\node[vertex] (u01) at (8,1) {};
\node at (8, 1.4) {$v_0^1$};
\node[vertex, fill=yellow] (u02) at (8, -2) {};
\node at (8, -2.4) {$v_0^2$};

\draw[dashed] (u01) -- (u02);

\begin{scope}[on background layer,opacity=0.5]
\fill[gray!20] (12,0) -- (16,0) -- (15,2) node[midway,above, black] {$ \ V_1$} -- (13,2) node[midway,above, black] {(c)}  -- cycle;
\fill[red!20] (12,-3) -- (16,-3) -- (15,-1)  node[midway,above, black] {$ \ V_2$} -- (13,-1) -- cycle;
\end{scope}
\node[vertex] (v01) at (13.5,1) {};
\node at (13.5,1.4) {$w_1^1$};
\node[vertex] (v11) at (14.5,1) {};
\node at (14.5,1.4) {$w_0^1$};

\node[vertex, fill=yellow] (v02) at (14,-2) {};
\node at (14,-2.4) {$w_0^2$};

\draw (v01) -- (v11);
\draw[dashed] (v01) -- (v02) -- (v11);
\begin{scope}[on background layer]
\fill[blue!20, opacity=0.5] (v01.center) -- (v11.center) -- (v02.center) -- cycle;
\end{scope}

\begin{scope}[on background layer,opacity=0.5]
\fill[gray!20] (18,0) -- (22,0) -- (21,2) node[midway,above, black] {$ \ V_1$} -- (19,2) node[midway,above, black] {(d)}  -- cycle;
\fill[red!20] (18,-3) -- (22,-3) -- (21,-1)  node[midway,above, black] {$ \ V_2$} -- (19,-1) -- cycle;
\end{scope}

\node[vertex] (w01) at (19.5,1) {};
\node at (19.5, 1.4) {$w_1^1$};
\node[vertex] (w11) at (20.5, 1) {};
\node at (20.5, 1.4) {$w_0^1$};

\node[vertex, fill=yellow] (w02) at (19, -2) {};
\node at (19, -2.4) {$w_0^2$};
\node[vertex, fill=yellow] (w12) at (20.5,-1.5) {};
\node at (20.5, -1.9) {$w_1^2$};
\draw (w01) -- (w11);
\draw (w02) -- (w12);
\draw[dashed] (w11) -- (w12) -- (w01) -- (w02); 
\draw[dotted] (w02) -- (w11);
\begin{scope}[on background layer]
\fill[green!30, opacity=0.8] (w01.center) -- (w11.center) -- (w12.center) -- cycle;
\fill[green!20, opacity=0.8] (w02.center) -- (w12.center) -- (w01.center) -- cycle;
\end{scope}

\begin{scope}[on background layer,opacity=0.5]
\fill[gray!20] (24,0) -- (28,0) -- (27,2) node[midway,above, black] {$ \ V_1$} -- (25,2) node[midway,above, black] {(e)}  -- cycle;
\fill[red!20] (24,-3) -- (28,-3) -- (27,-1)  node[midway,above, black] {$ \ V_2$} -- (25,-1) -- cycle;
\end{scope}

\node[vertex] (z01) at (26, 1) {};
\node at (26, 1.6) {$z_0^1$};
\node[vertex, fill=yellow] (z02) at (25,-2.5) {};
\node at (24.6, -2.6) {$z_0^2$};
\node[vertex, fill=yellow] (z12) at (26, -1.5) {};
\node at (25.8, -1) {$z_1^2$};
\node[vertex, fill=yellow] (z22) at (27, -2) {}; 
\node at (27, -2.6) {$z_2^2$};

\draw[dashed] (z02) -- (z01) -- (z22);
\draw (z02) -- (z12) -- (z22) -- (z02);
\draw[dotted] (z01) -- (z12);
\begin{scope}[on background layer]
\fill[blue!40, opacity=0.8] (z01.center) -- (z02.center) -- (z22.center) -- cycle;
\end{scope}

\end{tikzpicture}
\end{center}
\caption{ {\bf Crossimplices.} Schematic of: (a) A $(0,-1)$--crossimplex (a top vertex), a $(1,-1)$--crossimplex (top horizontal edge), and a $(-1,2)$--crossimplex (bottom horizontal triangle); (b) a $(0,0)$--crossimplex (a cross-edge); (c) a $(1,0)$--crossimplex (a top cross-triangle); (d) a $(1,1)$--crossimplex (a cross-tetrahedron); and (e) a $(0,2)$--crossimplex (also a cross-tetrahedron). Notice that cross-edges are always oriented from the vertex of the top layer to the one in the bottom layer. Therefore, cross-edges belonging to a cross-triangle are always of opposite orientations with respect to any orientation of the cross-triangle. There are two types of cross-triangles: the $(1,0)$-crossimplices (top cross-triangles) and the $(0,1)$--crossimplices (the bottom cross-triangle). Moreover, there are three types of cross-tetrahedrons: the $(0,2)$--crossimplices, the $(2,0)$--crossimplices, and the $(1,1)$--crossimplices.}
\label{fig:crossimplices}
\end{figure*}
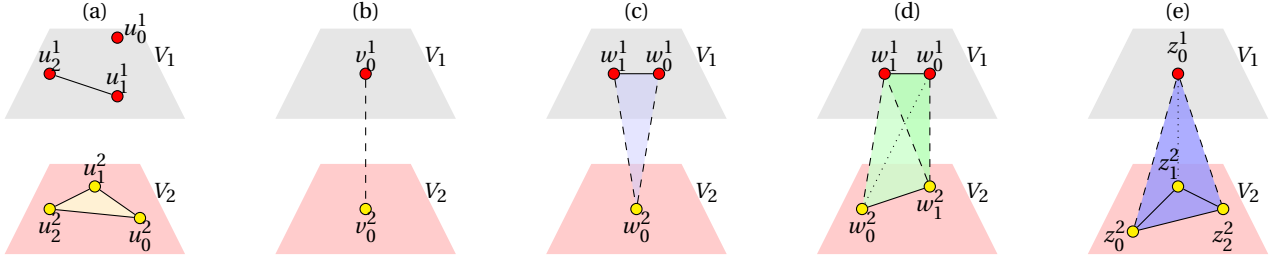

An {\em abstract crossimplicial bicomplex} $X$ (or a CSB) on $V_1$ and $V_2$ is a collection of crossimplices in $V_1\times V_2$ which is closed under the inclusion of crossfaces; {\em i.e.}, the crossface of a crossimplex is also a crossimplex. A crossimplex is {\em maximal} if it is not the crossface of any other crossimplex. $V_1$ and $V_2$ are called the vertex sets of $X$.

Given a CSB $X$, for fixed integers $k,l\ge 0$ we denote by $X_{k,l}$ the subset of all its $(k,l)$--crossimplices. We also use the notations
$X_{0,-1} = V_1, \ X_{-1,0} = V_2$, and $ X_{-1,-1} = \emptyset$. And recursively, $X_{k,-1}$ will denote the subset of crossimplices of the form $\{v_0^1, \ldots, v_k^1\}\subset V_1^{k+1}$ and $X_{-1,l}$ as the subset of crossimplices of the form $\{v_0^2, \ldots, v_l^2\}\subset V_2^{l+1}$. Such crossimplices will be referred to as {\em intralayer simplices} or {\em horizonal simplices}. We then obtain two simplicial complexes~\cite{hatcher} $X_{\bullet, -1}$ and $X_{-1,\bullet}$ that we will refer to as the {\em intralayer complexes}, and whose vertex sets are respectively $V_1$ and $V_2$. In particular, $X_{1,-1}$ and $X_{-1,1}$ are graphs with vertex sets $V_1$ and $V_2$, respectively.

The {\em dimension} of a $(k,l)$--crossimplex is $k+l+1$, and the dimension of the CSB $X$ is the dimension of its crossimplices of highest dimension. The $n$--{\em skeleton} of $X$ is the restriction of $X$ to the $(k,l)$--crossimplices such that $k+l+1\le n$. In particular, the $1$--skeleton of a CSB is a $2$--layered network, with $X_{0,0}$ being the set of interlayer links. Conversely, given a 2-layered network $\Gamma$ formed by two graphs $\Gamma_1=(V_1,E_1)$, $\Gamma_2= (E_2,V_2)$ with interlayer edge set $E_{1,2}\subset V_1\times V_2$, define a $(k,l)$--{\em clique} in $\Gamma$ as a pair $(\s_1, \s_2)$ where $\s_1$ is a $k$--clique in $\Gamma_1$ and $\s_2$ is an $l$--clique in $\Gamma_2$ with the property that $(i,j)\in E_{1,2}$ for every $i\in \s_1$ and $j\in \s_2$.  We define the {\em cross-clique bicomplex} $X$  associated to $\Gamma$ by letting $X_{k,l}$ to be the set of all $(k+1,l+1)$--{\em cliques} in $\Gamma$.

Now a {\em crossimplicial multicomplex} (CSM) $\cal X$  consists of a family of finite sets $V_s, s\in S\subseteq \bb N$, and a CSB $\cal X^{s,t}$ for each pair of distinct indices $s,t\in S$. It is {\em undirected} if the the sets of crossimplices in $\cal X^{s,t}$ and $\cal X^{t,s}$ are in one-to-one correspondence. In such a case, $\cal X$ is completely defined by the family of CSB $\cal X^{s,t}$ with $s<t$ (see Fig.~\ref{fig:CSMC} for a visualization of a 3--layer CSM). 

 

\paragraph*{\bf Orientation on crossimplices.}
An {\em orientation} of a $(k,l)$--crossimplex is an ordering choice over its vertices. When equipped with an orientation, the crossimplex is said to be {\em oriented} and will be represented as $[a]=\bsimplex{v}{k}{v}{l}$ if $k,l\ge 0$, or $[v_0^1, \ldots, v_k^1]$ (resp. $[v_0^2, \ldots, v_l^2]$) if  $k\ge 0$ and $l=-1$ (resp. $k=-1$ and $l\le 0$). We shall note that an orientation on crossimplices is just a choice purely made for computational purposes. Extending geometric representations from simplicial complexes, crossimplices can be represented as geometric objects.
Specifically, a $(0,-1)$--crossimplex is a vertex in the top layer; $(0,0)$--crossimplex  is a {\em cross-edge}  between layers $V_1$ and $V_2$; a $(1,-1)$--crossimplex (resp. $(-1,1)$--crossimplex) is a {\em horizontal edge} on $V_1$ (resp. $V_2$); a $(0,1)$--crossimplex  or a $(1,0)$--crossimplex  is a {\em cross-triangle}; a $(2,-1)$--crossimplex or $(-1,2)$--crossimplex is a {\em horizontal triangle} on layer $V_1$ or $V_2$; a $(3,-1)$--crossimplex or $(-1,3)$--crossimplex is a {\em horizontal tetrahedron} on $V_1$ or $V_2$; a $(1,1)$--crossimplex, a $(2,0)$--crossimplex, or a $(0,2)$--crossimplex,  is a {\em cross-tetrahedron}; and so on (see Fig.~\ref{fig:crossimplices} for illustrations). On the other hand, {\em horizontal} edges, triangles, tetrahedron, and so on, are just usual simplices on the horizontal complexes. One can think of a cross-edge as a connection between a vertex from one layer to a vertex on the other layer. In the same vein, a cross-triangle can be thought of as a connection between one vertex from one layer and two vertices on the other, and a cross-tetrahedron as a connection between either two vertices from one layer and two vertices on the other, or one vertex from one layer to three vertices on the other.
\paragraph*{\bf Weighted CSBs.}
A {\em weight} on a CSB $X$ is a positive function
$w: \bigcup_{k,l}X_{k,l} \rTo \bb R^+$
that does not depend on the orientations of crossimplices. A {\em weighted CSB} is one that is endowed with a weight function. The {\em weight of a crossimplex} $a\in X$ is the number $w(a)$.

%

\section{Topological descriptors}
\paragraph*{\bf Cross-boundaries.}
A CSB $X$ defines a {\em bisimplicial set}~\cite{goerss2009simplicial, moerdijk1989} by considering respectively the {\em top} and {\em bottom crossface maps} $\mixd{1}_{i|k,l}: X_{k,l} \rTo X_{k-1, l}$ and $\mixd{2}_{i|k,l}: X_{k,l} \rTo X_{k,l-1}$
by
\begin{equation}
\resizebox{.9\hsize}{!}{$
  \begin{aligned}
  \mixd{1}_{i|k,l}(\bsimplex{v}{k}{v}{l}) & = [v_0^1, \ldots, \widehat{v_i^1},\ldots, v_k^1; v_0^2, \ldots, v_l^2] \\
  \mixd{2}_{i|k,l}(\bsimplex{v}{k}{v}{l}) & =  [v_0^1, \ldots, v_k^1; v_0^2, \ldots, \widehat{v_i^2}, \ldots, v_k^2]
  \end{aligned}
  $}
\end{equation}
where the hat over a vertex means dropping the vertex. Moreover, for a fixed $l\ge -1$, $X_{\bullet, l}=(X_{k,l})_{k\ge -1}$ is a simplicial complex. Similarly, $X_{k,\bullet}=(X_{k,l})_{l\ge -1}$ is a simplicial complex. Observe that if $a=\{v_0^1, \ldots, v_k^1, v_0^2, \ldots, v_l^2\} \in X_{k,l}$, then $a^{(1)} = \{v_0^1, \ldots, v_k^1\}\in X_{k,-1}$ and $a^{(2)}=\{v_0^2, \ldots, v_l^2\}\in X_{-1,l}$. We will refer to $a^{(1)}$ and $a^{(2)}$ as {\em the top horizontal face} and {\em the bottom horizontal face} of $a$, respectively.  Conversely, two horizontal simplices $v^1\in X_{k,-1}$ and $v^2\in X_{-1,l}$ are said to be {\em interconnected} in $X$ if they are respectively the top and bottom horizontal faces of a $(k,l)$--crossimplex $a$. We then write $v^1\sim v^2$. This is basically equivalent to requiring that if $v^1=\{v_0^1, \ldots, v_k^1\}$ and $v^2=\{v_0^2,\ldots, v_l^2\}$ then $\{v_0^1, \ldots, v_k^1,v_0^2,\ldots, v_l^2\}\in X_{k,l}$. If $a=\{v_0^1, \ldots, v_k^1, v_0^2,\ldots, v_l^2\}\in X_{k,l}$, we define its {\em top cross-boundary} $\hbd{}a$ as the subset of $X_{k-1,l}$ consisting of all the top crossfaces of $a$; {\em i.e.}, all the $(k-1,l)$--crossimplices of the form $\mixd{1}_{i|k,l}[a]$ for $i=0,\ldots, k$. Analogously, its {\em bottom cross-boundary} $\vbd{}a\subseteq X_{k,l-1}$ is the subset of all its bottom crossfaces $\mixd{2}_{i|k,l}[a], i=0, \ldots, l$. 

Now two $(k,l)$--crossimplices $a,b\in X_{k,l}$ are said to be:

\begin{itemize}
\item {\em top-outer (TO) adjacent}, and we write $a\outAdj{1}{}b$ or $a\outAdj{1}{c}b$, if both are top crossfaces of a $(k+1,l)$--crossimplex $c$; in other words $a,b\in \hbd{}c$;

\item {\em top-inner (TI) adjacent}, and we write $a\inAdj{1}{}b$ or $a\inAdj{1}{d}b$, if there exists a $(k-1,l)$--crossimplex $d\in X_{k-1,l}$ which is a top crossface of both $a$ and $b$; {\em i.e.}, $d\in \hbd{}a\cap \hbd{}b$;  

\item {\em bottom-outer (BO) adjacent}, and we write $a\outAdj{2}{}b$ or $a\outAdj{2}{c}b$, if both are bottom crossfaces of a $(k,l+1)$--crossimplex $c\in X_{k,l+1}$; that is to say $a,b\in \vbd{}c$; and 

 \item {\em bottom-inner (BI) adjacent}, and we write $a\inAdj{2}{}b$ or $a\inAdj{2}{d}b$, if there exists a $(k,l-1)$--crossimplex $f\in X_{k,l-1}$ which is a bottom face of both $a$ and $b$; that is $d\in \vbd{}a\cap \vbd{}b$.
\end{itemize}
\paragraph*{\bf Degrees of crossimplices.}

Given a weight function $w$ on $X$, we define the following degrees of a $(k,l)$--crossimplex $a$ relative to $w$.
\begin{itemize}
  \item The {\em TO degree} of $a$ is the number 
  \begin{eqnarray}\label{eq:TO-degree}
    \deg_{TO}(a) = \deg_{TO}(a,w):= \sum_{\overset{a'\in X_{k+1,l}}{a\in \hbd{}a'}}w(a').
  \end{eqnarray}
  \item Similarly, the {\em TI degree} of $a$ is defined as
  \begin{eqnarray}\label{eq:TI-degree}
    \deg_{TI}(a) = \deg_{TI}(a,w) := \sum_{\overset{c\in X_{k-1,l}}{c\in \hbd{}a}}\frac{1}{w(c)}.
  \end{eqnarray}
  
  \item Analogously, the {\em BO degree} of $a$ is given by
  \begin{eqnarray}\label{eq:BO-degree}
    \deg_{BO}(a) = \deg_{BO}(a,w) := \sum_{\overset{a'\in X_{k,l+1}}{a\in \vbd{}a'}}w(a').
  \end{eqnarray}
  \item And the {\em BO degree} of $a$ is
  \begin{eqnarray}\label{eq:BI-degree}
    \deg_{BI}(a) = \deg_{BI}(a,w) := \sum_{\overset{c\in X_{k,l-1}}{c\in \vbd{}a}}\frac{1}{w(c)}.
  \end{eqnarray}

\end{itemize}
Observe that in the particular case where the weight function is everywhere equal to one, the TO degree of $a$ is precisely the number of $(k+1,l)$--crossimplices in $X$ of which $a$ is top cross-face, while $\deg_{TI}(a)$ is the number of top cross-faces of $a$, which equals to $k+1$. Analogous observation can be made about the BO and BI degrees.

\paragraph*{\bf Cross-homology groups.}

Define the space $C_{k,l}$ of $(k,l)$--{\em cross-chains} as the real vector space generated by all oriented $(k,l)$--crossimplexes in $\cal X$. The {\em top} and {\em bottom cross-boundary operators} $\hbd{k,l}: C_{k,l}\rTo C_{k-1,l}$, $\vbd{k,l}:C_{k,l}\rTo C_{k,l-1}$
are then defined as follows by the formula 
\begin{equation}\label{eq:boundaries}
  \xbd{s}{k,l}([a]) := \sum_{b\in \xbd{s}{}a}\sgn(b,\xbd{s}{}a)[b],   \\
\end{equation}
for $s=1,2$ and a generator $a\in X_{k,l}$, where $\sgn(b,\xbd{s}{}a)$ is the sign of the orientation of $b$ in $\xbd{s}{}a$; that is, if $b=\mixd{1}_{i|k,l}[a]$, then $\sgn(b,\hbd{}a):=(-1)^i$, and we define $\sgn(b,\vbd{}a)$ in a similar fashion.

It is straightforward to see that in particular 
\[
\hbd{k,-1}: C_{k,-1}\rTo C_{k-1,-1}, k\ge 0,
\]
and
\[
\vbd{-1,l}: C_{-1,l}\rTo C_{-1, l}, l\ge 0
\]
are the usual boundary maps of simplicial complexes. For this reason, we will put more focus on the mixed case where both $l$ and $k$ are non-negative. We will often drop the indices and just write $\mixb{1}$ and $\mixb{2}$ to avoid cumbersome notations. To see how these maps operates, let us compute for instance the images of the crossimplices  (b), (c), (d) and (e) illustrated in Fig.~\ref{fig:crossimplices}. We get:

\[
\left\{
\begin{array}{ll}
  \hbd{0,0}[v_0^1;v_0^2] & = [v_0^2]\in C_{-1,0} \\
   & \\
  \vbd{0,0}[v_0^1;v_0^2] & = - [v_0^1]\in C_{0,-1};
\end{array} 
\right.
\]
\\
\[
\left\{
\begin{array}{ll}
\hbd{1,0}[w_0^1,w_1^1;w_0^2] & = [w_1^1;w_0^2] - [w_0^1;w_0^2] \in C_{0,0}, \\
 & \\
\vbd{1,0}[w_0^1,w_1^1;w_0^2] & = [w_0^1,w_1^1] \in C_{1,-1};
\end{array}
\right. 
 \]
 \\
 \[
\left\{
\begin{array}{ll}
  \hbd{1,1}[w_0^1,w_1^1;w_0^2,w_1^2] & = [w_1^1;w_0^2, w_1^2] - [w_0^1; w_0^2,w_0^2] \in C_{0,1},\\
   & \\
   \vbd{1,1}[w_0^1,w_1^1;w_0^2,w_1^2] & = [w_0^1, w_1^1;w_1^2] - [w_0^1, w_1^1;w_0^2] \in C_{1,0}; 
\end{array}
\right. 
\]
\\
\[
\left\{
\begin{array}{ll}
\hbd{0,2}[z_0^1;z_0^2,z_1^2,z_2^2]  = & [z_0^2,z_1^2,z_2^2] \in C_{-1,2} \\ 
& \\
\vbd{0,2}[z_0^1;z_0^2,z_1^2,z_2^2]  = & [z_0^1;z_1^2,z_2^2] - [z_0^1;z_0^2,z_2^2]  \\
& + [z_0^1;z_0^2,z_1^2] \in C_{0,1}.
\end{array}
\right. 
\]

Notice that $\hbd{0,-1}=\vbd{-1,0}=0$. Moreover, by simple calculations from~\eqref{eq:boundaries}, it is easy to check that $\hbd{k-1,l}\hbd{k,l} = 0$ and  $\vbd{k,l-1}\vbd{k,l}  = 0$,
which allows to define the {\em top} and {\em bottom} $(k,l)$--{\em cross-homology groups} of $X$ as the quotients
\[
\hH{k,l}(X) := \ker \hbd{k,l}/\im \hbd{k+1,l}, \ {\rm and}
 \]
 
 \[
 \vH{k,l}(X):= \ker \vbd{k,l}/\im\vbd{k,l+1}.
 \]
 For for $k\ge 0$ and $l\le 0$,  $\hbd{k,-1}$  and $\vbd{-1,l}$ 
are the usual boundary maps of simplicial complexes~\cite{hatcher}. Therefore $\hH{k,-1}(X)$ and $\vH{-1,l}(X)$ are the usual homology groups~\cite{hatcher, lane1963homology} of the simplicial complexes $X_{\bullet, -1}$ and $X_{-1,\bullet}$, respectively.

\paragraph*{\bf Cross-Betti vectors.}

The cross-homology groups are completely determined by their dimensions, the {\em top} and {\em bottom} $(k,l)$--{\em cross-Betti numbers} $\betti{s}{k,l}(X) = \dim \xH{s}{k,l}(X)$, $s=1,2$.
In particular, $\betti{1}{k,-1}$ and $\betti{2}{-1,l}$ are the usual Betti numbers for the horizontal simplicial complexes~\cite{hatcher}. The couple $\b_{k,l}=(\betti{1}{k,l}, \betti{2}{k,l})$ is the $(k,l)$--{\em cross-Betti vector} of $X$ and can be computed using basic Linear Algebra. 
These vectors are descriptors of the topologies of both the horizontal complexes and their inter-connections. For instance, $\b_{0,-1}$ and $\b_{-1,0}$ encode the connectivities within and in-between the $1$--skeletons of the horizontal complexes associated to $X$. Precisely, $\betti{1}{0,-1}$ is the number of connected components of the graph $X_{1, -1}$ and $\betti{2}{0,-1}$ is the number of nodes in $V_1$ with no interconnections with any nodes in $V_2$. Similarly, $\betti{1}{-1,0}$ is the number of nodes in $V_2$ with no interconnections with any nodes in $V_1$, while $\betti{2}{-1,0}$ is the number of connected components of the bottom horizontal graph $X_{-1,1}$. 
Furthermore, $\b_{1,-1}$ counts simultaneously the number of loops in $X_{1,-1}$ and the number of its intralayer links that do not belong to cross-triangles formed with the graph $X_{-1,1}$. Analogous topological information is provided by $\b_{-1,1}$. Also, $\b_{0,0}$ measures the extent to which individual nodes of one complex layer serve as communication channels between different hubs from the other layer. More precisely, an element in $\hH{0,0}(X)$ represents either an interlayer $1$--dimensional loop formed by a path in $X_{1,-1}$ whose end-nodes interconnect with the same node in $V_2$, or two connected components in the top complex communicating with each other through a node in the bottom complex. In fact, $\b_{0,0}$ counts the shortest paths of length $2$ between nodes within one layer passing through a node from the other layer and not belonging to the cross-boundaries of cross-triangles; we call such paths {\em cones}. Put differently, $\b_{0,0}$ quantifies node clusters in one layer that are "controlled" by nodes in the other layer. Detailed proof of this description is provided in Appendix~\ref{ap:cones}.


\begin{table}[!ht]
\centering
\begin{tabular}{c| c c c}
                  & $\cal X^{1,2}$ & $\cal X^{1,3}$ & $\cal X^{2,3}$ \\
\hline
$\b^{\otimes}_{0,-1} $ & $(1,0)$ & $(1,0)$ & $(1,0)$  \\
$\b^{\otimes}_{1,-1} $ & $(13,21)$ & $(13,21)$ & $(6,14)$ \\
$\b^{\otimes}_{-1,0} $ & $(0,1)$ & $(1,1)$ & $(0,1)$ \\
$\b^{\otimes}_{-1,1} $ & $(17,6)$ & $(29,16)$ & $(29,16)$ \\
$\b^{\otimes}_{0,0} $ & $(11,14)$ & $(20,24)$ & $(19,23)$ 

\end{tabular}
\caption{{\bf Cross-Betti table}. The cross-Betti table for the CSM of Figure~\ref{fig:CSMC}. The table quantifies the connectedness of the three horizontal complexes, the number of cycles in each of them, the number of nodes in each layer that are not connected to the other layers, the number of intra-layer edges not belonging to any cross-triangles, as well as the number of paths of length 2 connecting nodes in one layer and passing through a node from another layer.}
\label{tab:betti-table}

\end{table}

Now, given a CSM $\cal X$, its {\em cross-Betti table} $\b^{\otimes}_{k,l}$ is obtained by computing all the cross-Betti vectors of all its underlying CSB's. Computation of the cross-Betti table of the CSM of Fig.~\ref{fig:CSMC} is presented in Table~\ref{tab:betti-table}. 


\begin{figure}[!ht]
\centering
\begin{tikzpicture}[scale=.8]

\begin{scope}[on background layer,opacity=0.5]
\fill[gray!20] (0,0) -- (10,0) -- (8,2) node[midway,above, black] {$ \ V_1$} -- (1,2)  -- cycle;
\fill[red!20] (0,-3) -- (10,-3) -- (8,-1)  node[midway,above, black] {$ \ V_2$} -- (1,-1) -- cycle;
\end{scope}

\node[vertex] (u0) at (8.5,0.5) {};
\node at (8.9,0.5) {$v^1_0$};
\node[vertex] (u1) at (7.5,1) {};
\node at (7.5,1.4) {$v^1_1$};
\node[vertex] (u2) at (6.5,0.2) {};
\node at (6.5,0.6) {$v^1_2$};
\node[vertex] (u3) at (5,1.4) {};
\node at (5,1.8) {$v^1_3$};
\node[vertex] (u4) at (4,0.5) {};
\node at (4,0.9) {$v^1_4$};
\node[vertex] (u5) at (3,1.5) {};
\node at (3.4,1.5) {$v^1_5$};
\node[vertex] (u6) at (2, 0.5) {};
\node at (1.6, 0.5) {$v^1_6$};
\node[vertex] (u7) at (1,1) {};
\node at (0.7, 1) {$v^1_7$};

\node[vertex, fill=yellow] (v0) at (8,-2.5) {};
\node at (8.4, -2.5) {$v^2_0$};
\node[vertex, fill=yellow] (v1) at (6,-1.5) {};
\node at (6.4, -1.5) {$v^2_1$};
\node[vertex, fill=yellow] (v2) at (5,-2.5) {};
\node at (5,-2.8) {$v^2_2$};
\node[vertex, fill=yellow] (v3) at (3,-2) {};
\node at (3, -2.4) {$v^2_3$};
\node[vertex, fill=yellow] (v4) at (2, -1.5) {};
\node at (1.6, -1.5) {$v^2_4$};
\node[vertex, fill=yellow] (v5) at (1, -2.5) {};
\node at (0.6, -2.5) {$v^2_5$};

\draw (u0) -- (u1) -- (u2) -- (u3) -- (u4);
\draw (u1) -- (u3);
\draw (u5) -- (u6) -- (u7) -- (u5);
\draw (v2) -- (v1) -- (v0) -- (v2) -- (v3) -- (v4);
\draw (v4) -- (v5) -- (v3);

\draw[dashed] (u0) -- (v1) -- (u1);
\draw[dashed] (u2) -- (v1) -- (u4);
\draw[dashed] (u4) -- (v4) -- (u6) ;
\draw[dashed] (v1) -- (u6) -- (v2);
\draw[dashed] (u6) -- (v5);

\begin{scope}[on background layer]
\fill[magenta!70] (u5.center) -- (u6.center) -- (u7.center) -- cycle;
\fill[blue!20, opacity=0.6] (u0.center) -- (v1.center) -- (u1.center) -- cycle;
\fill[blue!20, opacity=0.5] (u1.center) -- (v1.center) -- (u2.center) -- cycle;
\fill[green!50, opacity=0.5] (u6.center) -- (v1.center) -- (v2.center) -- cycle;
\fill[green!60, opacity=0.5] (u6.center) -- (v4.center) -- (v5.center) -- cycle;
\fill[yellow!20, opacity=0.8] (v0.center) -- (v1.center) -- (v2.center) -- cycle;
\fill[yellow!20, opacity=0.8] (v4.center) -- (v5.center) -- (v3.center) -- cycle;

\end{scope}

\end{tikzpicture}
\caption{{\bf Cross-Betti vectors.}  Schematic of a $2$--dimensional CSB with 14 nodes in total, and whose oriented maximal crossimplices are the intralayer triangle $[v^1_5, v^1_6, v^1_7] $ in $X_{2,-1}$,  the intralayer edges $[v^1_0,v^1_1], [v^1_1,v^1_2], [v^1_1,v^1_3], [v^1_2,v^1_3]$ in $X_{1,-1}$, the bottom intralayer triangles $[v^2_0,v^2_1,v^2_2], [v^2_3,v^2_4,v^2_5]$ in $X_{-1,2}$, and the intralayer edge $[v^2_2,v^2_3]$ in $X_{-1,1}$, the cross-triangles $[v^1_0,v^1_1;v^2_1], [v^1_1,v^1_2;v^2_1]$ in $X_{1,0}$, $[v^1_6;v^2_1,v^2_2], [v^1_6;v^2_4,v^2_5]$ in $X_{0,1}$, and the cross-edges $[v^1_4;v^2_1], [v^1_4;v^2_4]$ in  $X_{0,0}$.}
\label{fig:cross-betti}
\end{figure}
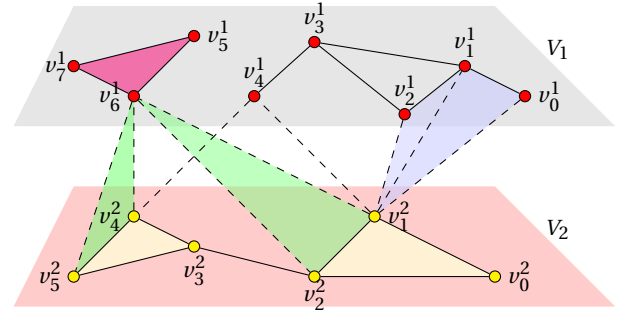

To illustrate what the cross-Betti vectors represent, we consider the simple $2$--dimensional CSB $X$ of Fig.~\ref{fig:cross-betti}. We get $\betti{1}{0,-1}=2, \betti{1}{1,-1}=1$, and $\betti{2}{-1,0}=1, \betti{2}{-1,1}=0$; which reflects the fact the top layer has $2$ connected components and $1$ cycle, while the bottom one has one component and no cycles. Moreover, $3$ top nodes are not interconnected to the bottom complex, $6$ top edges are not top faces of cross-triangles, $2$ bottom nodes are not interconnect to the top layer, and $5$ bottom edges are not bottom faces of cross-triangles. This information is encoded in $\b_{0,-1}=(2,3), \b_{1,-1}=(1,6), \b_{-1,0}=(2,1)$ and $\b_{-1,1}=(5,0)$. There are $3$ generating interlayer cycles, two of which are formed by an intralayer path in the bottom layer and a node in the top layer ($v^1_4$ and $v^1_6$), and the other one is formed by an intralayer path in the top layer and a node ($v^2_1$) in the bottom layer. Moreover, the two nodes $v^2_1$ and $v^2_4$ of $V_2$ interconnect the two separated components of the top layer; they serve as {\em cross-hubs}: removing both nodes eliminates all communications between the two components of the top layer. Cross-hubs and these types of interlayer cycles are exactly what $\b_{0,0}$ encodes. Specifically, by computing the cross-homology of $X$ we get $\betti{1}{0,0}=3$ which count the cycle $v^1_2 - v^1_3 - v^1_4 - v^2_1 - v^1_2$ and the nodes $v^2_4$ and $v^2_1$ that interconnect $v^1_4$ to $v^1_6$ and $v^1_2$ to $v^1_6$, $\betti{2}{0,0}=2$ counting the interlayer cycles $v^1_4 - v^2_1 - v^2_2 - v^2_3 - v^2_4 - v^1_4$ and $v^1_6 - v^2_2 - v^2_3 - v^2_4 - v^1_6$.  In each of these cycles, the top node allows a shortest (interlayer) path between the end-points of the involved intralayer path. 

Using algebraic-topological methods to calculate the cross-Betti vectors for larger multicomplexes can quickly become computationally heavy. We provide powerful linear-algebraic tools that not only allow to compute easily the $\b_{k,l}$'s, but also tell exactly where the topological structures being counted are located within the multicomplex.


\section{Spectral descriptors}
\paragraph*{\bf Cross-forms.}
 Denote by $C^{k,l}:=C^{k,l}(X,\bb R)$ the dual space $\Hom_{\bb R}(C_{k,l},\bb R)$ of the real vector space $C_{k,l}$. Namely, $C^{k,l}$ is the vector space of real linear functionals $\phi: C_{k,l}\rTo \bb R$. 
We will refer to such functionals as $(k,l)$--{\em forms} or {\em cross-forms}  on $X$. In particular, $(k,-1)$--forms correspond to $k$--forms on the simplicial complex $X_{\bullet, -1}$, and $(-1,l)$--forms are $l$--forms on the complex $X_{-1,\bullet}$. We have $C^{-1,-1}=0$, and by convention we set $C^{k,l}(\emptyset)=0$. 

Notice that a natural basis of $C^{k,l}$ is given by the set of linear forms $$\{e_a:C_{k,l}\rTo \bb R, \  a\in X_{k,l}\},$$ called {\em elementary cross-forms}, where 
\[
e_a(b) = \left\{\begin{array}{ll}1, & {\rm if \ } a=b, \\ 0, & {\rm otherwise}, \end{array} \right.
\]
which naturally identify $C^{k,l}$ with $C_{k,l}$. Now, define the maps $\hcobd{k,l}:C^{k,l}\rTo C^{k+1,l}$ and $\vcobd{k,l}:C^{k,l}\rTo C^{k,l+1}$
%
by the following equations
\begin{eqnarray}
\begin{aligned}
  \hcobd{k,l}\phi([a]) & = \sum_{b\in \hbd{}a}\sgn(b,\hbd{}a)\phi([b]), \\
   \vcobd{k,l}\phi([c]) & = \sum_{d\in \vbd{}c}\sgn(d,\vbd{}c)\phi([d]),
  \end{aligned}
\end{eqnarray}
for $\phi\in C^{k,l}, a\in X_{k+1,l}$ and $c\in X_{k,l+1}$. Next, given a weight $w$ on $X$, we get an inner-product on cross-forms by setting
\begin{eqnarray}\label{eq:weight-inner-product}
  \inner{\phi,\psi}{k,l} := \sum_{a\in X_{k,l}}w(a)\phi(a)\psi(a), \ {\rm for \ } \phi, \psi \in C^{k,l}.
\end{eqnarray}
It can been seen that, with respect to this inner-product, elementary cross-forms form an orthogonal basis, and by simple calculations,
 the dual maps are given by

\begin{eqnarray}\label{eq:dual-coboundary-formula}
  (\hcobd{k,l})^*\phi([a]) = \sum_{\overset{a'\in X_{k+1,l}}{a\in \hbd{}a'}}\frac{w(a')}{w(a)}\sgn(a,\hbd{}a')\phi([a']),
\end{eqnarray}
for $\phi\in C^{k+1,l}, a\in X_{k,l}$. And obviously we get a similar formula for the dual $(\vcobd{k,l})^*$.

\paragraph*{\bf The cross-Laplacian operators.}

%
 Identifying $C_{k,l}$ with $C^{k,l}$ and equipping it with an inner product as~\eqref{eq:weight-inner-product}, we define the following self-adjoint linear operators on $C_{k,l}$ for all $k,l\ge -1$:

    - the {\em top $(k,l)$--cross-Laplacian} 
   \[
   \Lap{T}{k,l} := (\hcobd{k,l})^*\hcobd{k,l} + \hcobd{k-1,l}(\hcobd{k-1,l})^*;
   \]
   
  - and the {\em bottom $(k,l)$--cross-Laplacian} 
   \[
   \Lap{B}{k,l} := (\vcobd{k,l})^*\vcobd{k,l} + \vcobd{k,l-1}(\vcobd{k,l-1})^*.
   \]

Being defined on finite dimensional spaces, these operators can be represented as square matrices indexed over crossimplices. Specifically, denoting $N_{k,l} = |X_{k,l}|$, $\Lap{T}{k,l}$ is represented by positive definite $N_{k,l}\times N_{k,l}$--matrices (see Appendix~\ref{ap:matrices}) for the general expressions). 
%
%

Moreover, the null-spaces, the elements of which we call {\em harrmonic} {\em cross-forms}, are easily seen to be in one-to-one correspondence with cross-cycles on $X$. Namely, we have the following isomorphisms (see Appendix~\ref{ap:harmonic} for the proof) 
\[
\hH{k,l}(X) \cong \ker\Lap{T}{k,l}, \ \vH{k,l}(X)\cong \ker\Lap{B}{k,l}.
\]
It follows that in order to compute the cross-Betti vectors, it suffices to determine the dimensions of the eigenspaces of the zero-eigenvalues of the cross-Laplacians. 

It should be noted that in addition to being much easier to implement, the spectral method to compute cross-homology has the advantage of providing a geometric representation of the cross-Betti numbers through eigenvectors. But before we see how this works, let's make a few observations. Notice that $\Lap{T}{0,-1}$ and $\Lap{B}{-1,0}$ are the usual graph Laplacians of degree $0$ for the horizontal complexes. And more generally, $\Lap{T}{k,-1}$  and $\Lap{B}{-1,l}$ are the combinatorial higher Hodge Laplacians~\cite{horak2013spectra, lim2015hodge, schaub2020} of degree $k$ and $l$, respectively, for the horizontal simplicial complexes. Furthermore, $\Lap{B}{k,-1}$ (resp. $\Lap{T}{-1,l}$) detects the $k$--simplices (resp. $l$--simplices) in the top (resp. bottom) layer complex that are not top (resp. bottom) faces of $(k,0)$--crossimplices (resp. $(0,l)$--crossimplices). Moreover, one can see that $\Lap{B}{k,-1}$ is the diagonal matrix indexed over the $k$--simplices on the top complex and whose diagonal entries are the BO degrees. Similarly, $\Lap{T}{-1,l}$ is the diagonal matrix whose diagonal entries are the TO degrees of the $l$--simplices on the bottom complex. This is consistent with the interpretation  of the cross-Betti numbers $\betti{2}{0,-1}$ and $\betti{1}{-1,0}$ given earlier in terms of connectivities between the $1$--skeletons of the horizontal complexes.

\paragraph*{\bf Harmonic cross-hubs.}

Assume for the sake of simplicity that the $X$ is equipped with the trivial weight $\cong 1$. Then, by~\eqref{eq:matrix-representation}, the $(0,0)$--cross-Laplacians $\Lap{T}{0,0}$ and $\Lap{B}{0,0}$ are respectively represented by the $N_{0,0}\times N_{0,0}$--matrices indexed on cross-edges $a_i, a_j\in X_{0,0}$ whose entries are given by
\begin{eqnarray}\label{eq:00LapT}
  (\Lap{T}{0,0})_{a_i,a_j} = \left\{
   \begin{array}{l}
     \deg_{TO}(a_i)+1, \  {\rm if\ } i=j, \\
     \begin{array}{ll}
     1, & {\rm if\ } i\neq j, [a_i]=[v_i^1;v_k^2], \\
     & [a_j]=[v_j^1;v_k^2] \\
     & {\rm and\ } \{v_i^1,v_j^1,v_k^2\}\notin X_{1,0},
     \end{array}
     \\
     
      0, \ {\rm otherwise},
     
   \end{array}
  \right.
\end{eqnarray}

and 
\begin{eqnarray}\label{eq:00LapB}
  (\Lap{B}{0,0})_{a_i,a_j} = \left\{
   \begin{array}{l}
     \deg_{BO}(a_i)+1,  \ {\rm if\ } i=j, \\
     \begin{array}{ll}
     1,  & {\rm if\ } [a_i]=[v_{i_0}^1;v_i^2], \\
     & [a_j]=[v_{i_0}^1;v_j^2], \\
      &  {\rm and\ } \{v_{i_0}^1,v_i^2,v_j^2\}\notin X_{0,1},
      \end{array}
       \\
     
      0, \  {\rm otherwise}.
     
   \end{array}
  \right.
\end{eqnarray}
Applied to the toy example of Fig.~\ref{fig:cross-betti}, $\Lap{T}{0,0}$ has a zero-eigenvalue of multiplicity $3$, generating the three $(0,0)$--cross-cycles in Table~\ref{tab:00generators}. 

\begin{table}\centering

\begin{tabular}{c|c|c|r}
$\omega_1$ & $\omega_2$ & $\omega_3$ & \\
\hline 
 $0.0290$ & - $0.2872$ &    $0.2236$ &  $[  v^1_0  ;   v^2_1 ]$ \\
  $0.0290$ &  - $0.2872$ &  $0.2236$  & $[  v^1_1  ;   v^2_1 ]$ \\
  $0.0290$ &  - $0.28721$ &  $0.2236$ & $[  v^1_2  ;   v^2_1 ]$ \\ 
  $0.0$       &  $0.0$    &    - $0.8944$ &  $[  v^1_4  ;   v^2_1 ]$ \\
  $0.7035$ &  $0.0710$ &   $0.0$       & $[  v^1_4  ;   v^2_4 ]$ \\ 
 - $0.0870$ &  $0.8616$ &  $0.2236$ &  $[  v^1_6  ;   v^2_1 ]$ \\
   $0.0$       &  $0.0$       &  $0.0$        & $[  v^1_6  ;   v^2_2 ]$  \\
  - $0.7035$ & - $0.0710$ &  $0.0$       &$[  v^1_6  ;   v^2_4 ]$ \\
   $0.0$         &  $0.0$      &   $0.0$      &$[  v^1_6  ;   v^2_5 ]$
\end{tabular}%
\caption{Harmonic $(0,0)$--cross-forms. The $3$ eigenvectors of the eigenvalue $0$ of $\Lap{T}{0,0}$ corresponding to the synthetic CSB of Figure~\ref{fig:cross-betti}. There are $2$ harmonic cross-hubs : $v^2_1$ and $v^2_4$, their respective harmonic cross-hubness are $2.6177$ and $1.4070$. }
\label{tab:00generators}
\end{table}

Each coordinate in the eigenvectors is seen as an "intensity" along the corresponding cross-edge. Cross-edges with non-zero intensities sharing the same bottom node define certain communities in the top complex that are "controlled" by the involved bottom node. These community structures depend on both the underlying topology of the top complex and its interdependence with the other complex layer. We then refer to them as {\em harmonic cross-clusters}, and the bottom nodes controlling them are thought of as  {\em harmonic cross-hubs} (HCH). The {\em harmonic cross-hubness} of a bottom node is the $L^1$--norm of the intensities of all cross-edges having it in common. Here, in the eigenvectors of the eigenvalue $0$, there are two subsets of cross-edges with non-zero coordinates: the cross-edges with $v^2_1$ in common, and the ones with $v^2_4$ in common. We therefore have two harmonic cross-hubs (see illustration in Fig.~\ref{fig:sp-xhubs}), hence two harmonic cross-clusters. 
The first one is responsible for the top layer cross-cluster $\{v^1_0,v^1_1, v^1_2,v^1_4, v^1_6\}$, while the second one controls the top layer cross-cluster $\{v^1_4, v^1_6\}$. The intensity of each involved cross-edge is the $L^1$--norm of its corresponding coordinates in the 3 eigenvectors,
and the harmonic cross-hubness is the sum of the intensities of the cross-edges interconnecting the corresponding cross-hub to each of the top nodes in the cross-clusters it controls. For instance, $v^2_1$ is the bottom node with the highest harmonic cross-hubness which is $2.6177$. This reflects the fact $v^2_1$ not only interconnects the two connected components of the top complex (which $v^2_4$ does as well), but it also allows fast-track connections between the highest number of nodes that are not directly connected with intra-layer edges in the top complex. The same calculations applied to the eigenvectors of the zero-eigenvalues of $\Lap{B}{0,0}$ give $v^1_6$ as the top node with the highest harmonic cross-hubness \wrt  the bottom complex. 


%
%
%
%

%

\paragraph*{\bf Spectral persistence of cross-hubs.}

\begin{figure}[!h]
\begin{center}
\begin{tabular}{ll}
(A) & (B) \\
\includegraphics[width=0.22\textwidth]{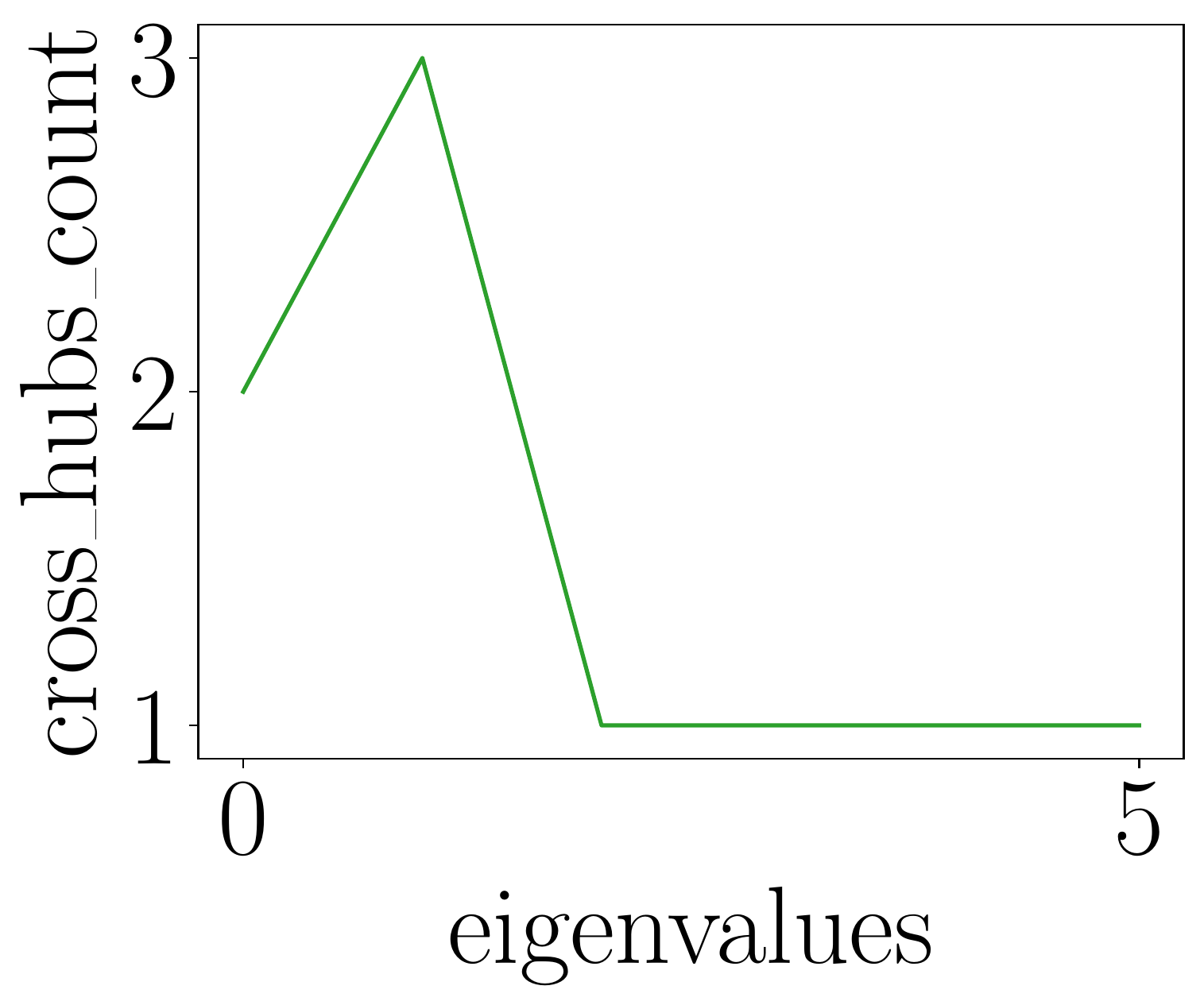} 
& \includegraphics[width=0.22\textwidth]{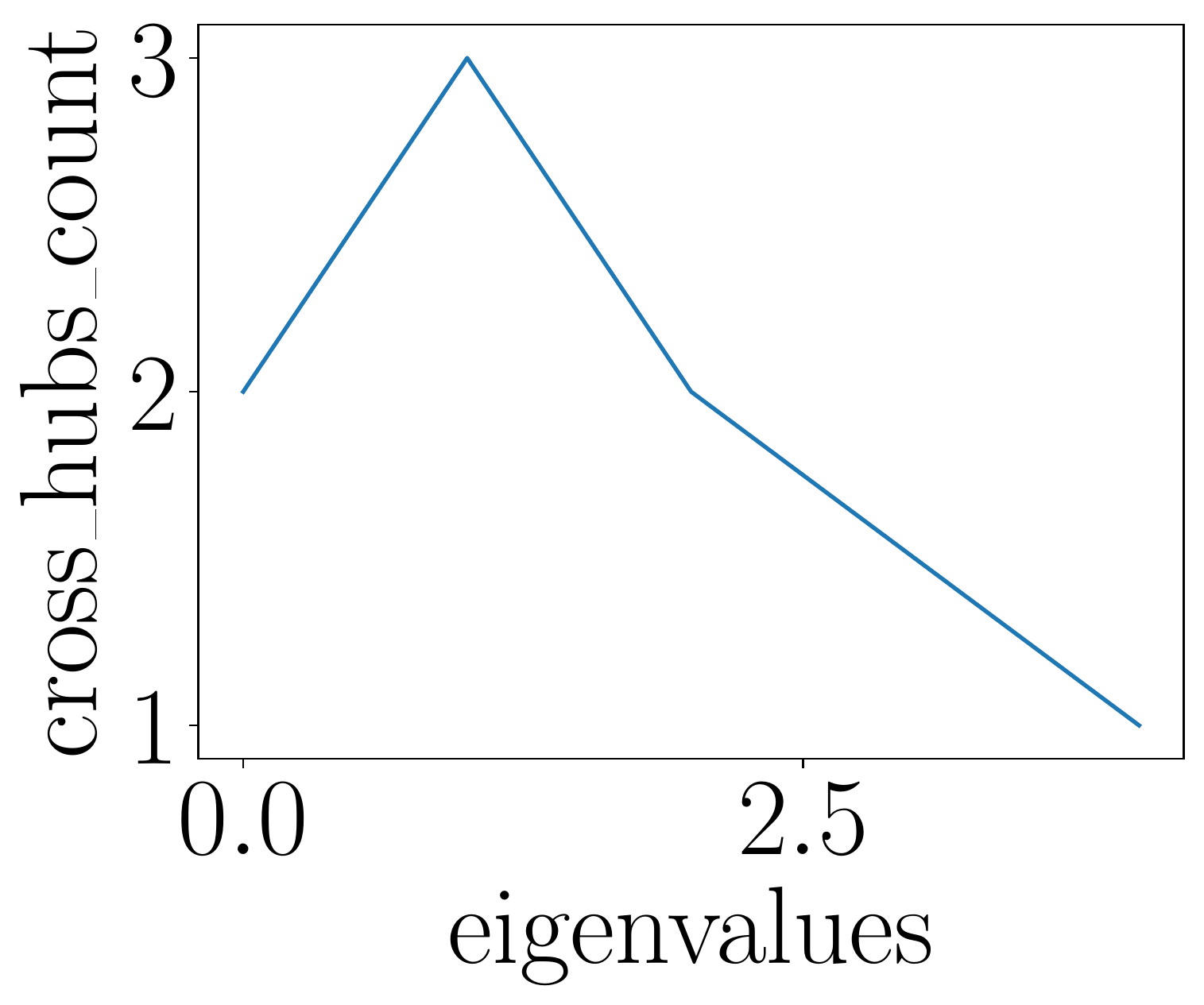} \\
(C) & (D) \\
 \includegraphics[width=0.22\textwidth]{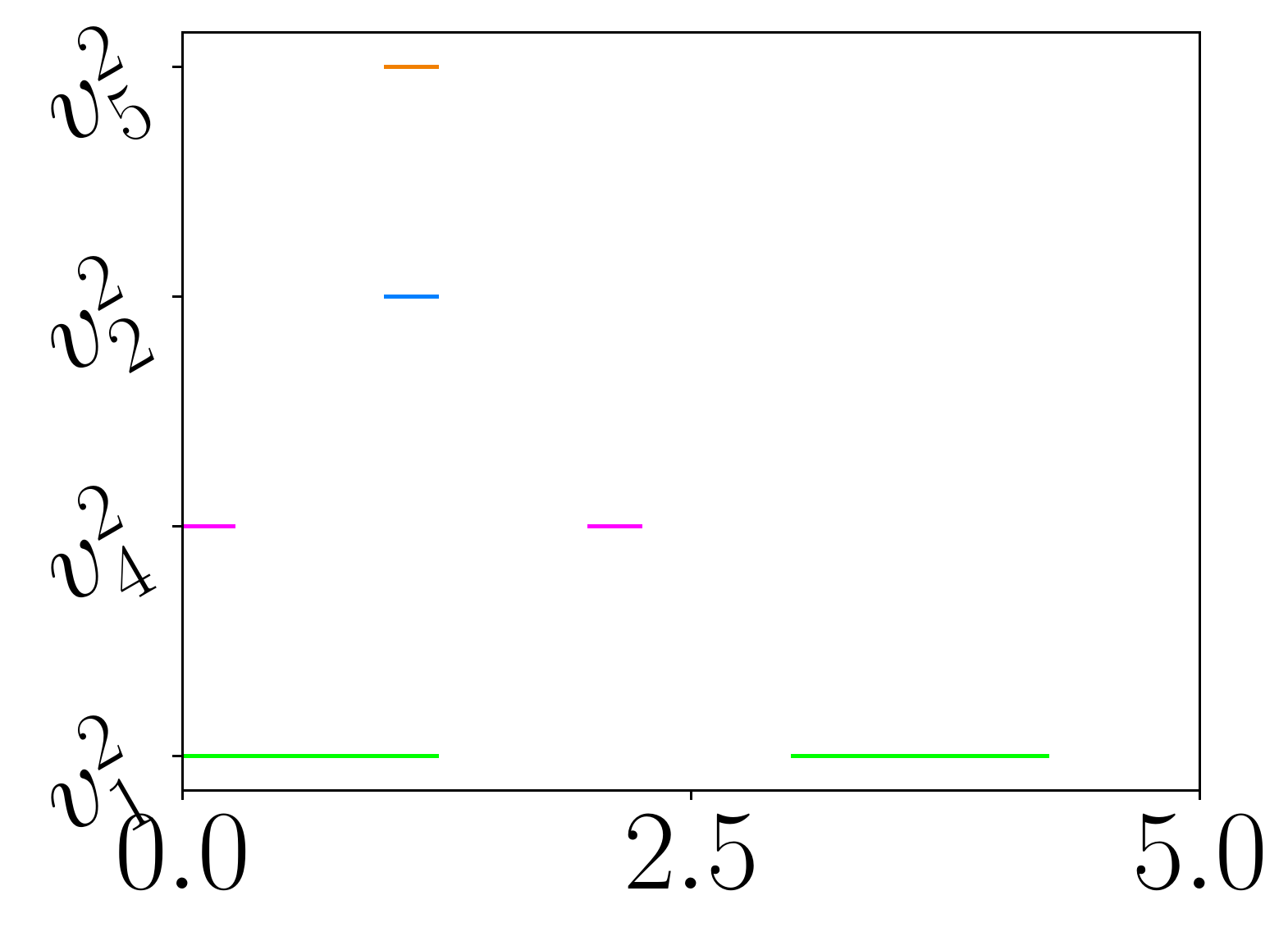} 
&
 \includegraphics[width=0.22\textwidth]{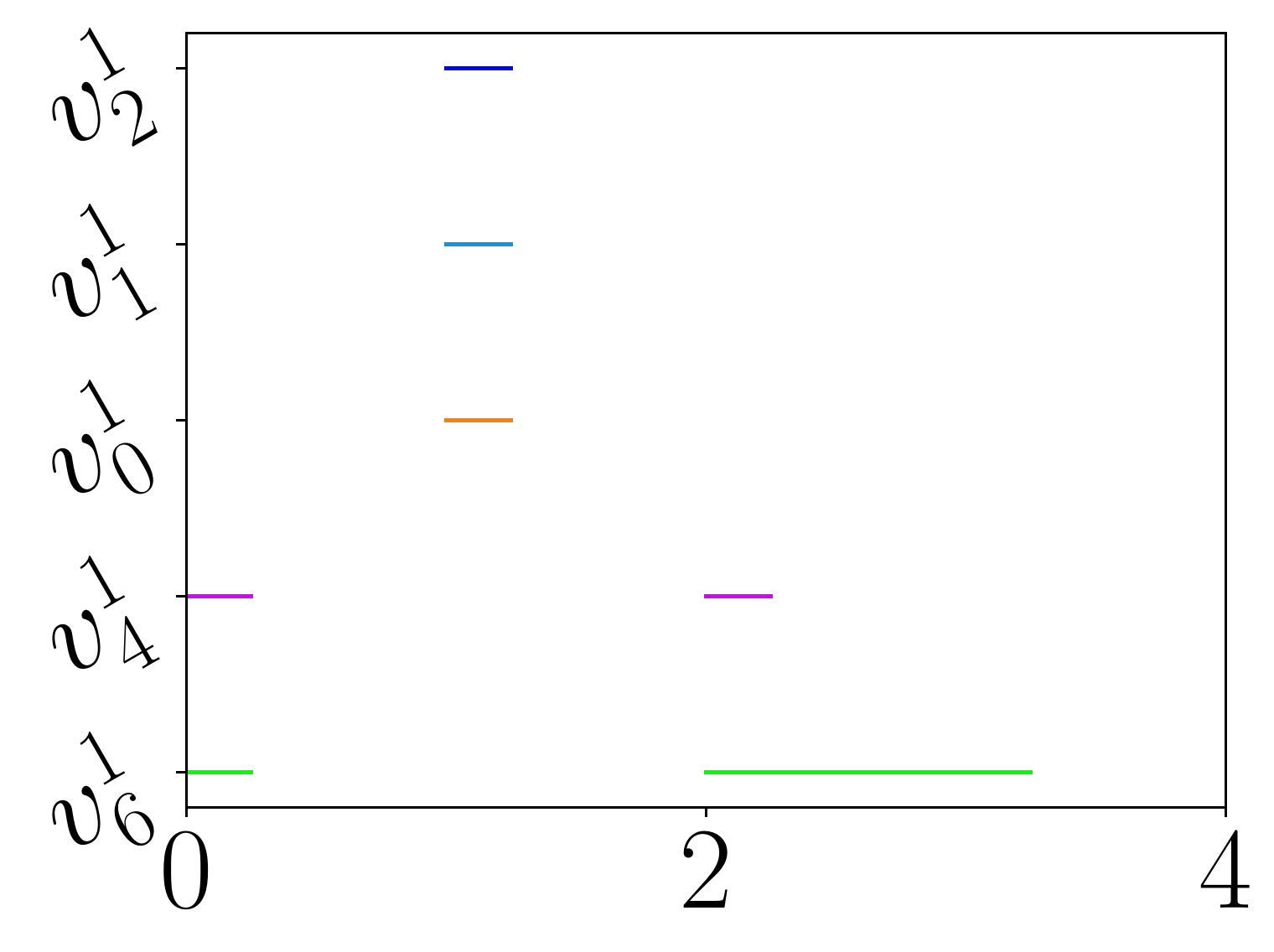}
\end{tabular}
 \end{center}
\caption{{\bf Spectral persistence of cross-hubs.} Schematic illustrations of the variations of spectral cross-hubs along the eigenvalues and  the spectral persistence bars codes for the toy CSB of Fig.~\ref{fig:cross-betti}: (A) shows the number of bottom nodes that emerge as spectral cross-hubs \wrt the top layer as a function of the eigenvalues of $\Lap{T}{0,0}$, and (B) represents the number of top nodes revealed as spectral cross-hubs \wrt the bottom layer as a function of the eigenvalues of $\Lap{B}{0,0}$. (C) and (D) represent the {\em spectral persistence bar codes} for $\Lap{T}{0,0}$ and $\Lap{B}{0,0}$, respectively. For both the top and bottom $(0,0)$--cross-Laplacians, most of the spectral cross-hubs, hence of spectral cross-clusters, emerge during the first stages (smallest eigenvalues), very few of them survive at later stages, and here only one cross-hub emerge or survive at the largest eigenvalue ($v^2_1$ for $\Lap{T}{0,0}$ and $v^1_6$ for $\Lap{B}{0,0}$).}
\label{fig:pers-bars}
\end{figure}

To better grasp the idea of cross-hubness, let us have a closer look at the coordinates of the eigenvectors of the $(0,0)$--cross-Laplacians (\eqref{eq:00LapT} and~\eqref{eq:00LapB}) whose eigenvalues are all non-negative real numbers. Suppose $\phi=(x_1, ..., x_{N_{0,0}})$ is an eigenvector for an eigenvalue $\l^T$ of $\Lap{T}{0,0}$. Then, denoting the cross-edges by $a_i, i=1, ..., N_{0,0}$, we have the relations 
\begin{eqnarray}
x_i = \frac{1}{\l^T-\deg_{TO}(a_i)}\sum_j \chi(a_i,a_j)x_j,
\end{eqnarray}
where $\chi$ is such that $\chi(a_i,a_j)=1$ if $i=j$ or if $a_i$ and $a_j$ are adjacent but do not belong to a top cross-triangle, and $\chi(a_i,a_j)=0$ otherwise. It follows that the cross-edge intensity $|x_i|$ grows larger as $\deg_{TO}(a_i)\to \l^T$. In particular, for $\l^T=0$, the intensity is larger for cross-edges that belong to a large number of cones and to the smallest number of top cross-triangles. Now, consider the other extreme of the spectrum, namely $\l^T=\l^T_{max}$ to be the largest eigenvalue of $\Lap{T}{0,0}$. Then, the intensity $|x_i|$ is larger for cross-edges belonging to the largest number of top cross-triangles and a large number of top cones at the same time.

\begin{figure*}[!ht]
\begin{center}
\begin{tabular}{lll}
(A) &  (B) & (C)\\
\includegraphics[width=0.20\textwidth]{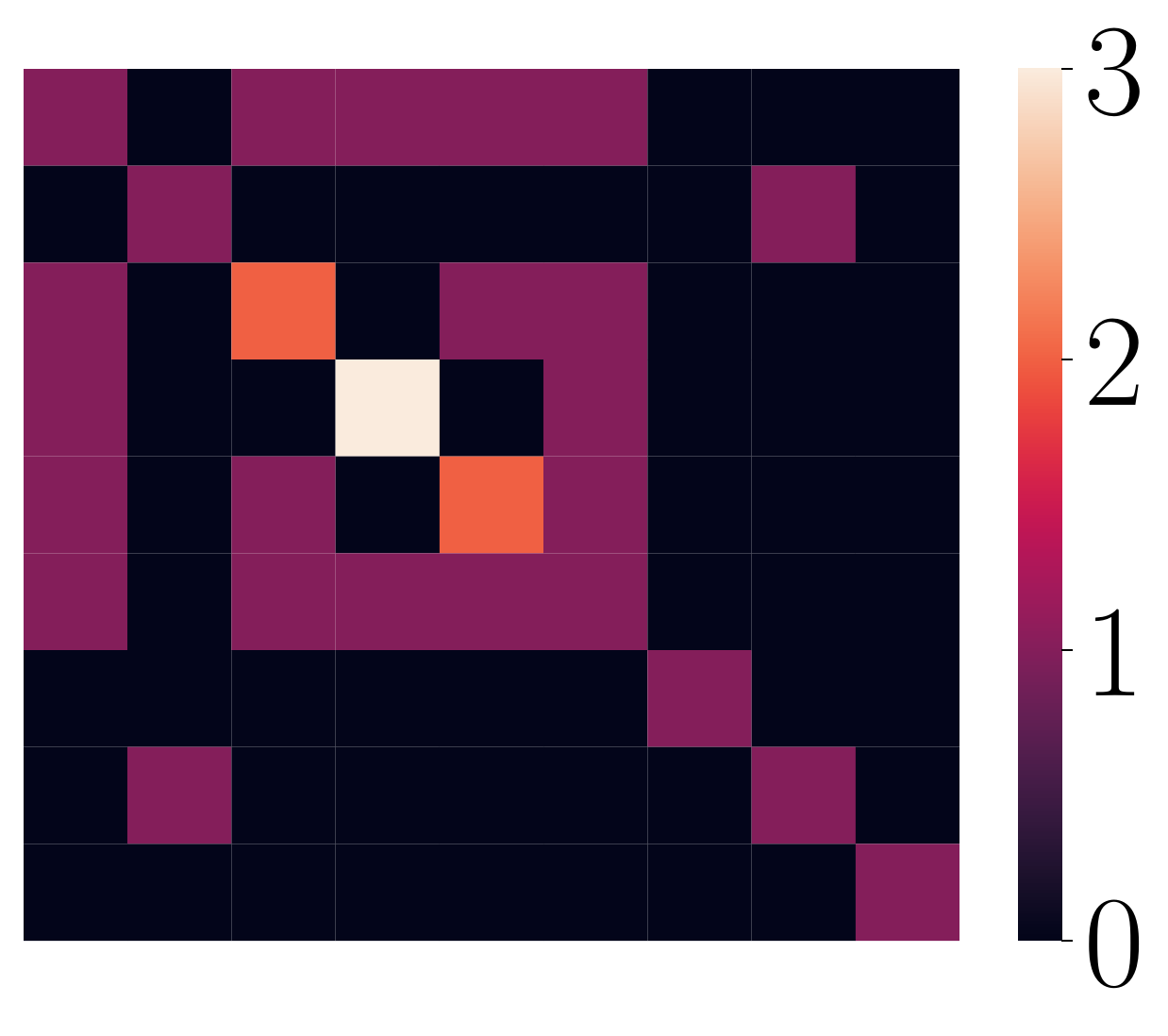}
 &  
 \includegraphics[width=0.24\textwidth]{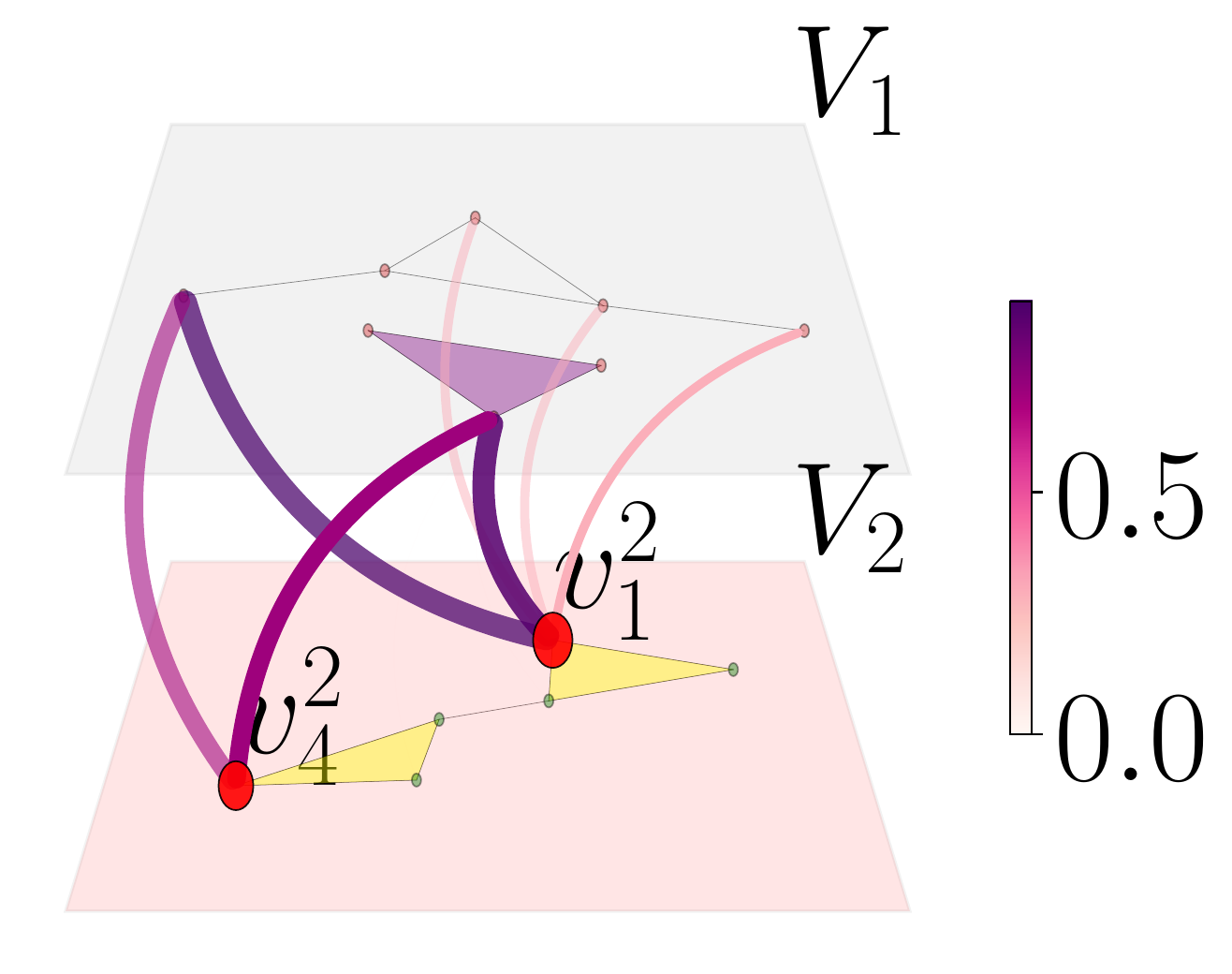} 
& 
\includegraphics[width=0.24\textwidth]{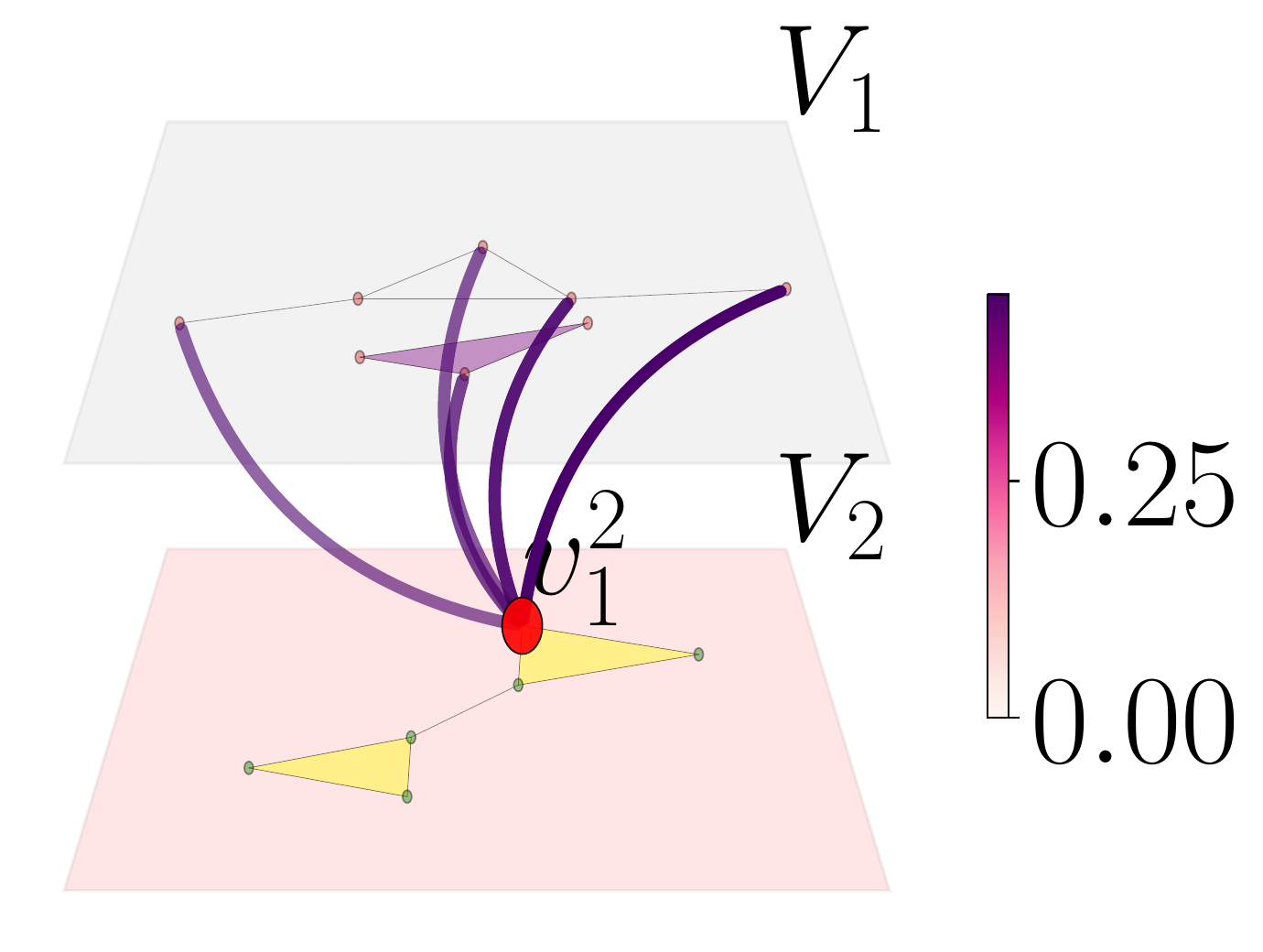}  \\  
(D) & (E) & (F) \\
\includegraphics[width=0.20\textwidth]{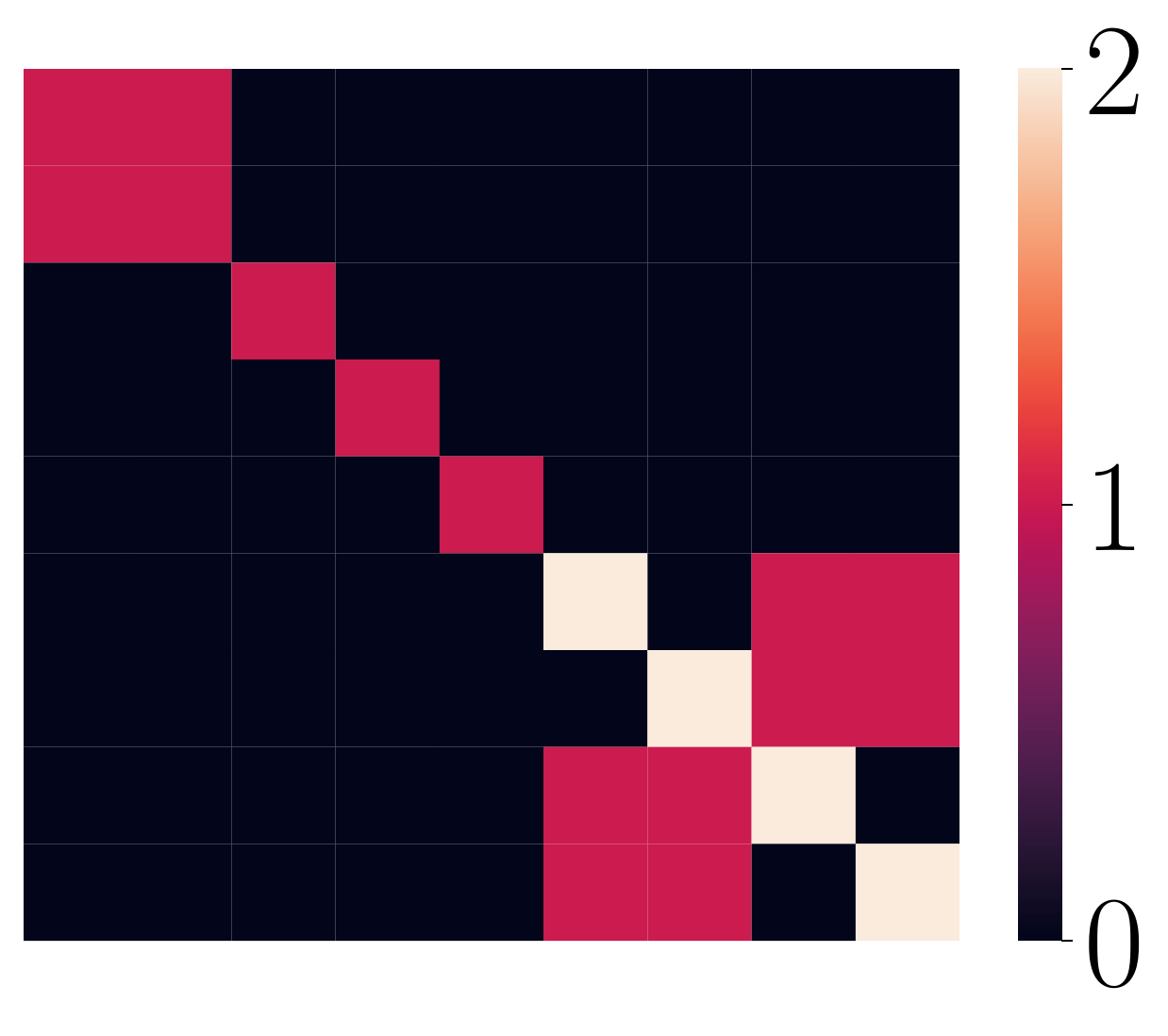} & 
\includegraphics[width=0.24\textwidth]{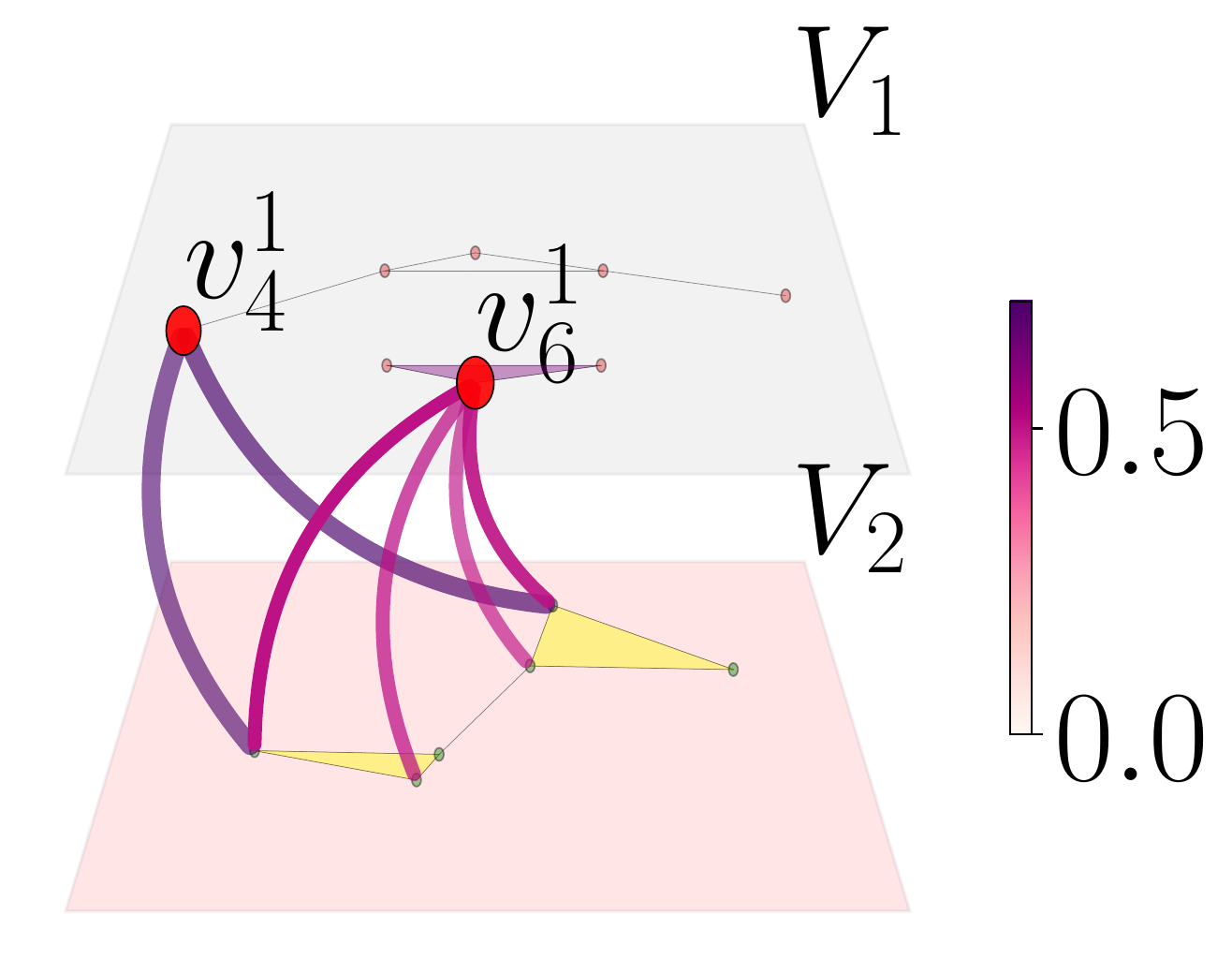} 
& 
\includegraphics[width=0.24\textwidth]{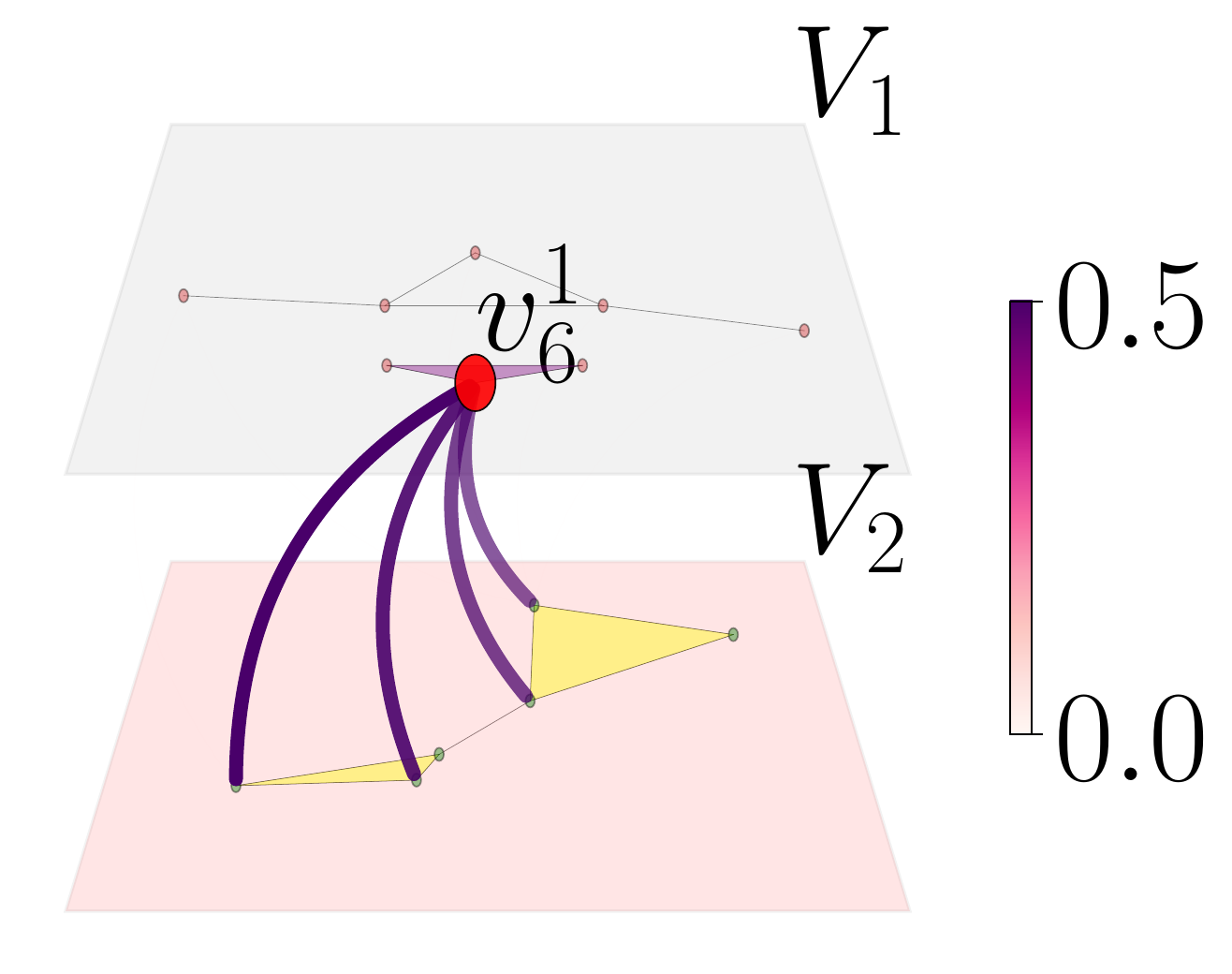} \\

\end{tabular}
\end{center}

\caption{{\bf Cross-Laplacians, harmonic and principal cross-hubs.} (A) and (D): Heat-maps of the the top and bottom $(0,0)$--cross-Laplacian matrices for the example of Fig.~\ref{fig:cross-betti}. Both matrices are indexed over the cross-edges of the CSB, and the diagonal entries correspond to one added to the number of cross-triangles containing the corresponding cross-edge. $\Lap{T}{0,0}$ has a zero eigenvalue of multiplicity $3$, while $\Lap{B}{0,0}$ has a zero eigenvalue of multiplicity $2$. (B) and (E): The harmonic cross-hubs \wrt to the top (resp. the bottom) horizontal complex of $X$; the {\em intensity} of a cross-edge is given by the $L^1$--norm of the corresponding coordinates in the eigenvectors of the eigenvalue $0$. (C) and (F): the {\em principal cross-hubs} in the bottom (resp. top) layer \wrt the top (resp. bottom) layer; by definition, they are the spectral cross-hubs obtained from by the largest eigenvalues of the top and bottom $(0,0)$--cross-Laplacians, respectively.}
\label{fig:sp-xhubs}
\end{figure*}

Taking the case of a 2-layered network, for $\l^T=0$, $|x_i|$ is larger for a cross-edge pointing to a bottom node interconnecting a largest number of top nodes that are not directly connected with intra-layer edges; and for $\l^T=\l_{max}$, $|x_i|$ is larger for a cross-edge pointing to a bottom node interconnecting a large number of top intra-layer communities both with each other and with a large number of top nodes that are not directly connected to each other via intra-layer edges.

 More generally, by applying the same process to each distinct eigenvalue, we obtain clustering structures in the top layer that are {\em controlled} by the bottom nodes and that vary along the spectrum $\l_1^T\le \l_2^T\le \cdots \le \l^T_{max}$ of $\Lap{T}{0,0}$. At every  stage, we regroup the cross-edges with non-zero coordinates in the associated eigenvectors and pointing to the same nodes, then sum up their respective intensities to obtain a ranking among a number of cross-hubs that we call {\em spectral cross-hubs} (SCHs). Intuitively, the intensities held by cross-edges gather to confer a 'restructuring power' onto the common bottom node -- the cross-hub, allowing it to control a cluster on the top layer. It is clear that, by permuting the top layer with the bottom layer, the same reasoning applies to $\Lap{B}{0,0}$. In particular, we define the {\em principal cross-hubs} (PCH) in the bottom layer \wrt the top layer as the SCHs obtained from $\l^T_{\max}$. The {\em principal cross-hubness} of a bottom PCH is defined as its restructuring power. In a similar fashion, we define the principal cross-hubness in the top layer \wrt the bottom layer using the largest eigenvalue $\l^B_{max}$ of $\Lap{B}{0,0}$. 
Going back to the bicomplex of Fig.~\ref{fig:cross-betti}, the largest eigenvalue of $\Lap{T}{0,0}$ is $\l^T_{max}=5$, the corresponding eigenvector is represented by Table~\ref{tab:CSB-PCH}.

\begin{table}[!ht]
\centering
\begin{tabular}{c|c}
    $0.4472$ &  $[  v^1_0  ;   v^2_1 ]$ \\
    $0.4472$  & $[  v^1_1  ;   v^2_1 ]$ \\
   $0.4472$ & $[  v^1_2  ;   v^2_1 ]$ \\ 
  $0.4472$ &  $[  v^1_4  ;   v^2_1 ]$ \\
    $0.0$       & $[  v^1_4  ;   v^2_4 ]$ \\ 
   $0.4472$ &  $[  v^1_6  ;   v^2_1 ]$ \\
    $0.0$        & $[  v^1_6  ;   v^2_2 ]$  \\
   $0.0$       &$[  v^1_6  ;   v^2_4 ]$ \\
   $0.0$      &$[  v^1_6  ;   v^2_5 ]$
 \end{tabular}
 \caption{Principal eigenvector of $\Lap{T}{0,0}$ for the CSB of Figure~\ref{fig:cross-betti}. By definition, this is the eigenvector associated to the largest eigenvalue.}
 \label{tab:CSB-PCH}
\end{table}

There is only one PCH in the bottom layer \wrt the top layer, which is the bottom node $v^2_1$, and its principal cross-hubness is $2.2360$.

Interestingly, the number of SCHs that appear for a given eigenvalue tend to vary dramatically \wrt the smallest eigenvalues before it eventually decreases or stabilizes at a very low number (see Fig.~\ref{fig:pers-bars} and Fig.~\ref{fig:atn-persistence-SPB}). Some cross-hubs may appear at one stage along the spectrum and then disappear at a future stage. This suggests the notion of {\em spectral persistence} of cross-hubs. Nodes that emerge the most often or live longer as cross-hubs along the spectrum might be seen as the most central in restructuring the topology of the other complex layer.  The more we move far away from the smallest non-zero eigenvalue, the most powerful are he nodes that emerge as hubs facilitating communications between aggregations of nodes in the other layer. The emergence of spectral cross-hubs is represented by a horizontal line -  {\em spectral persistence bar} -  running through the indices of the corresponding eigenvalues (Fig.~\ref{fig:pers-bars}). The spectral persistence bars corresponding to all SCHs (the {\em spectral bar codes}) obtained from $\Lap{T}{0,0}$ (resp. $\Lap{B}{0,0}$) constitute a {\em signature} for all the clustering structures imposed by the bottom (resp. top) layer to the top (resp. bottom) layer.



\begin{figure*}[!th]
\begin{tabular}{cc}
Lufthansa to Ryanair & Lufthansa to Easyjet \\
 \includegraphics[width=0.42\textwidth]{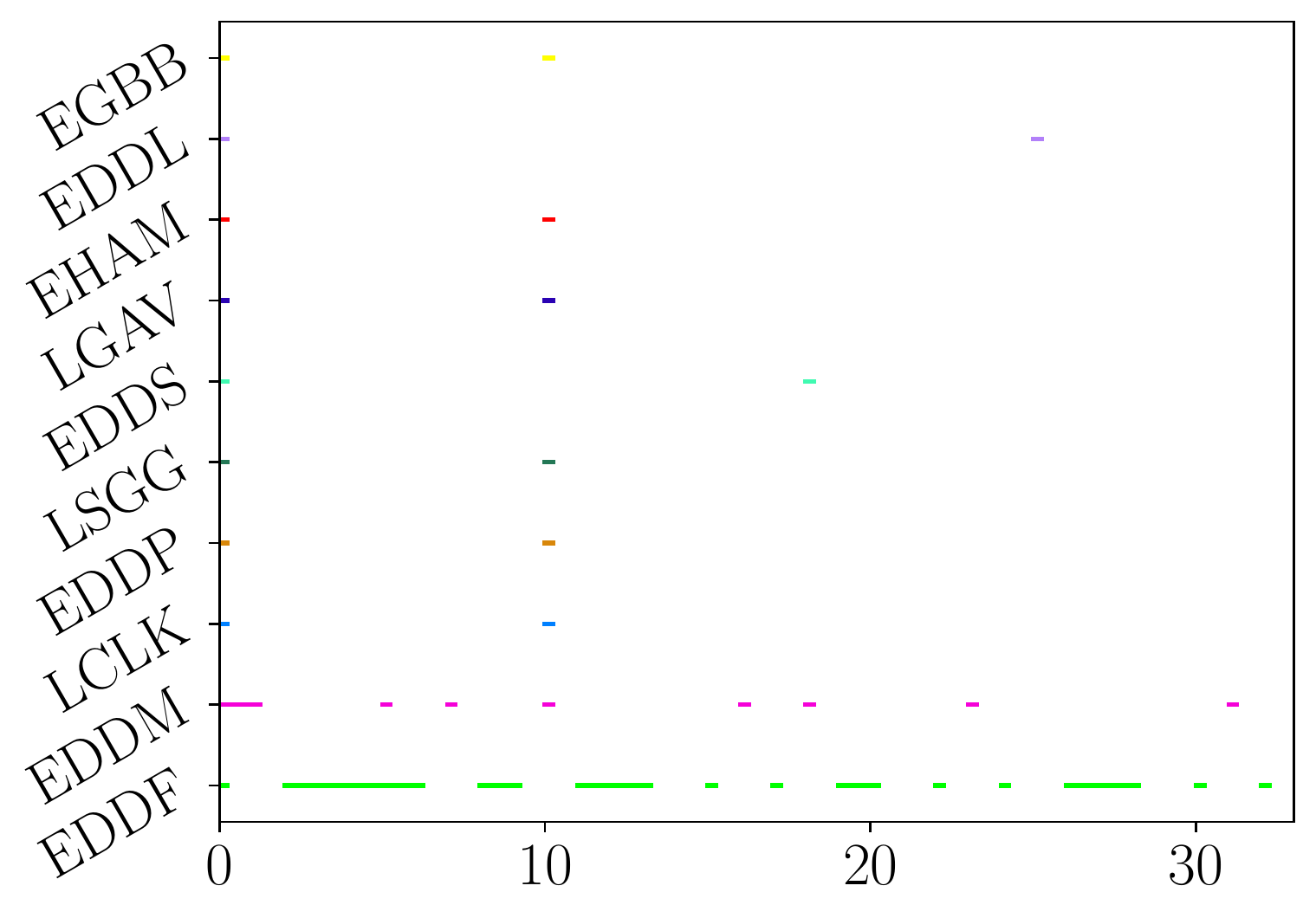} 
 &
  \includegraphics[width=0.42\textwidth]{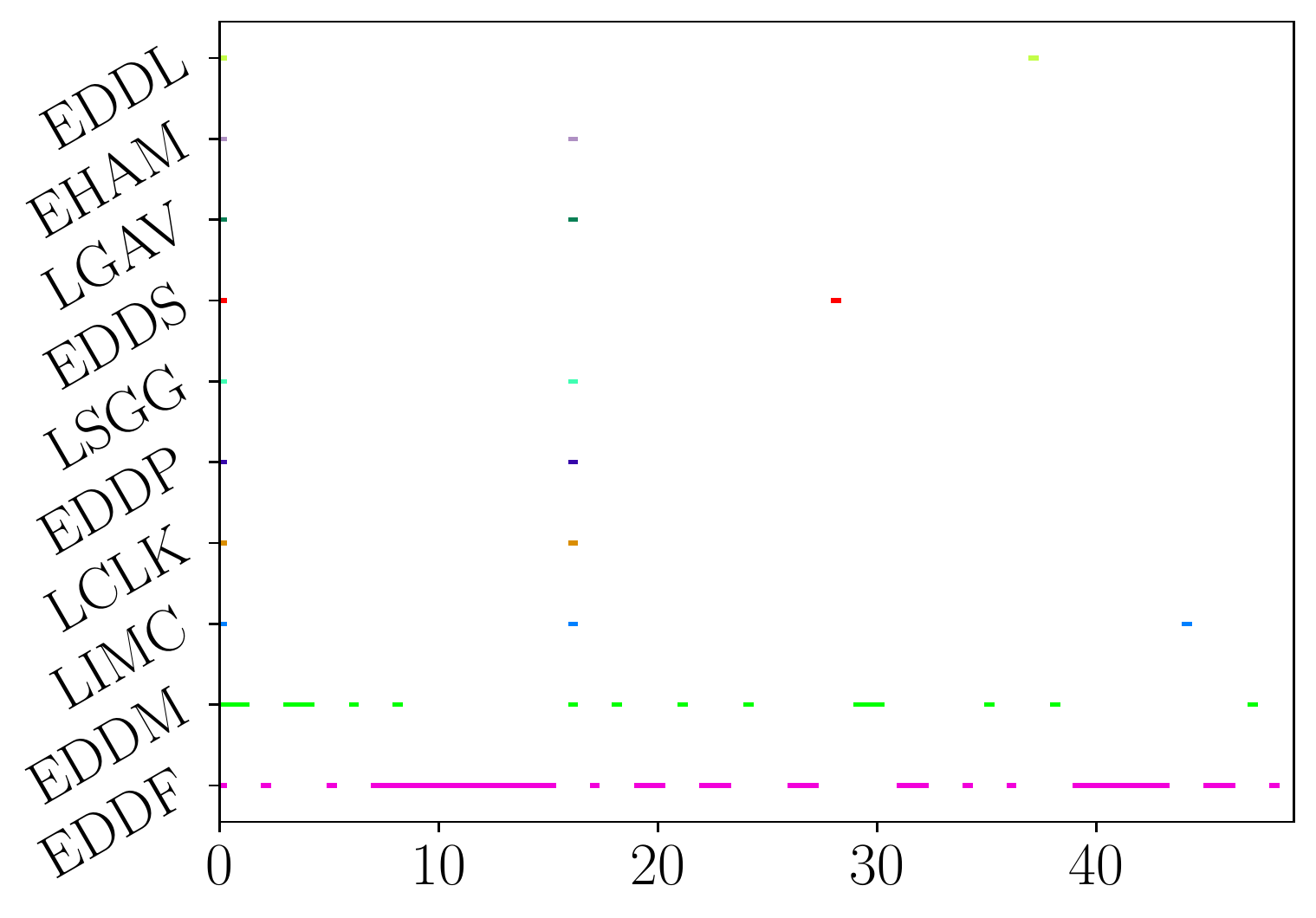}  \\
  Ryanair to Lufthansa & Rynair to Easyjet \\
    \includegraphics[width=0.42\textwidth]{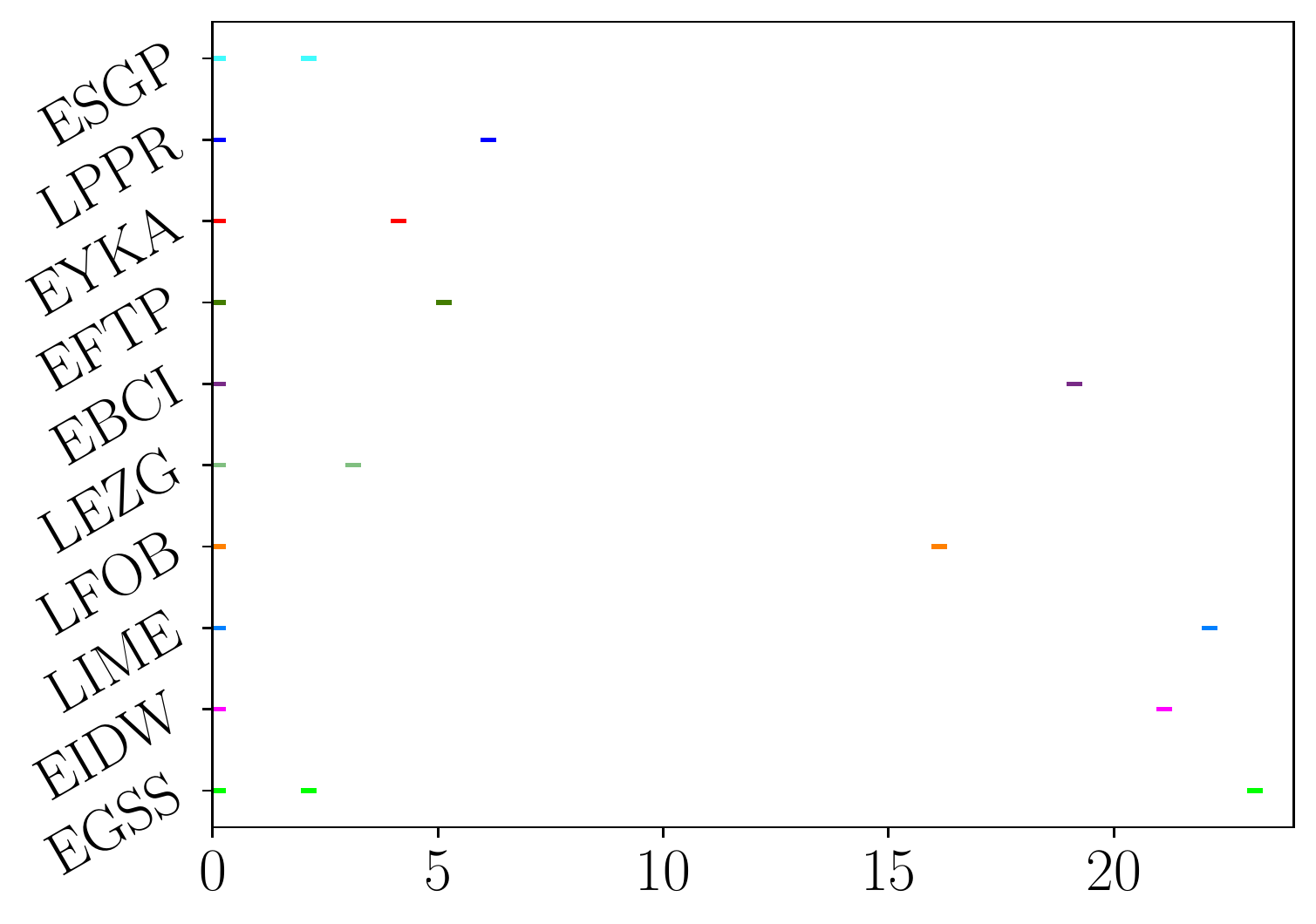}  
    &
    \includegraphics[width=0.42\textwidth]{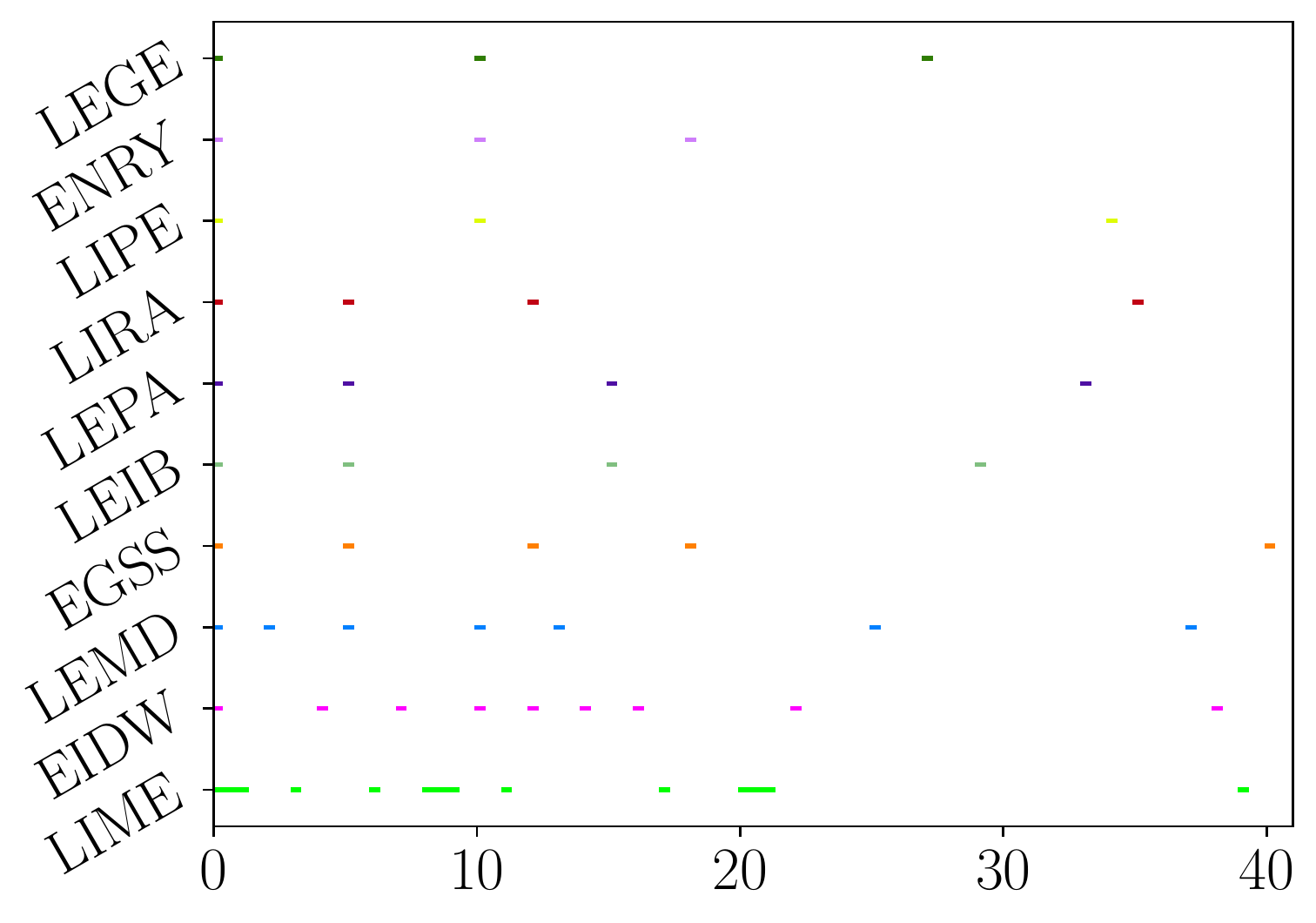} \\
 Easyjet to Lufthansa & Easyjet to Ryanair \\ 
  \includegraphics[width=0.42\textwidth]{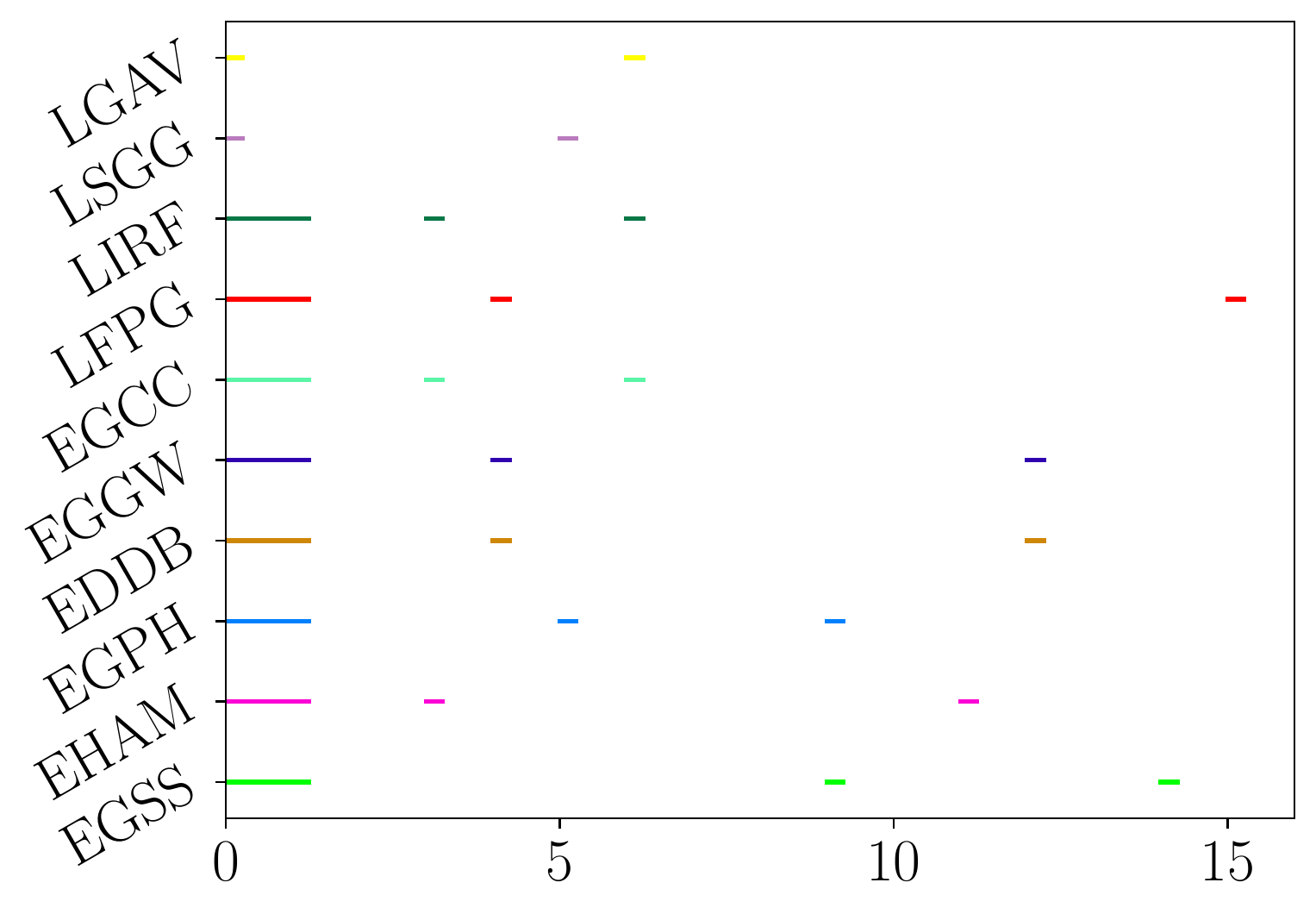}  
  &
    \includegraphics[width=0.42\textwidth]{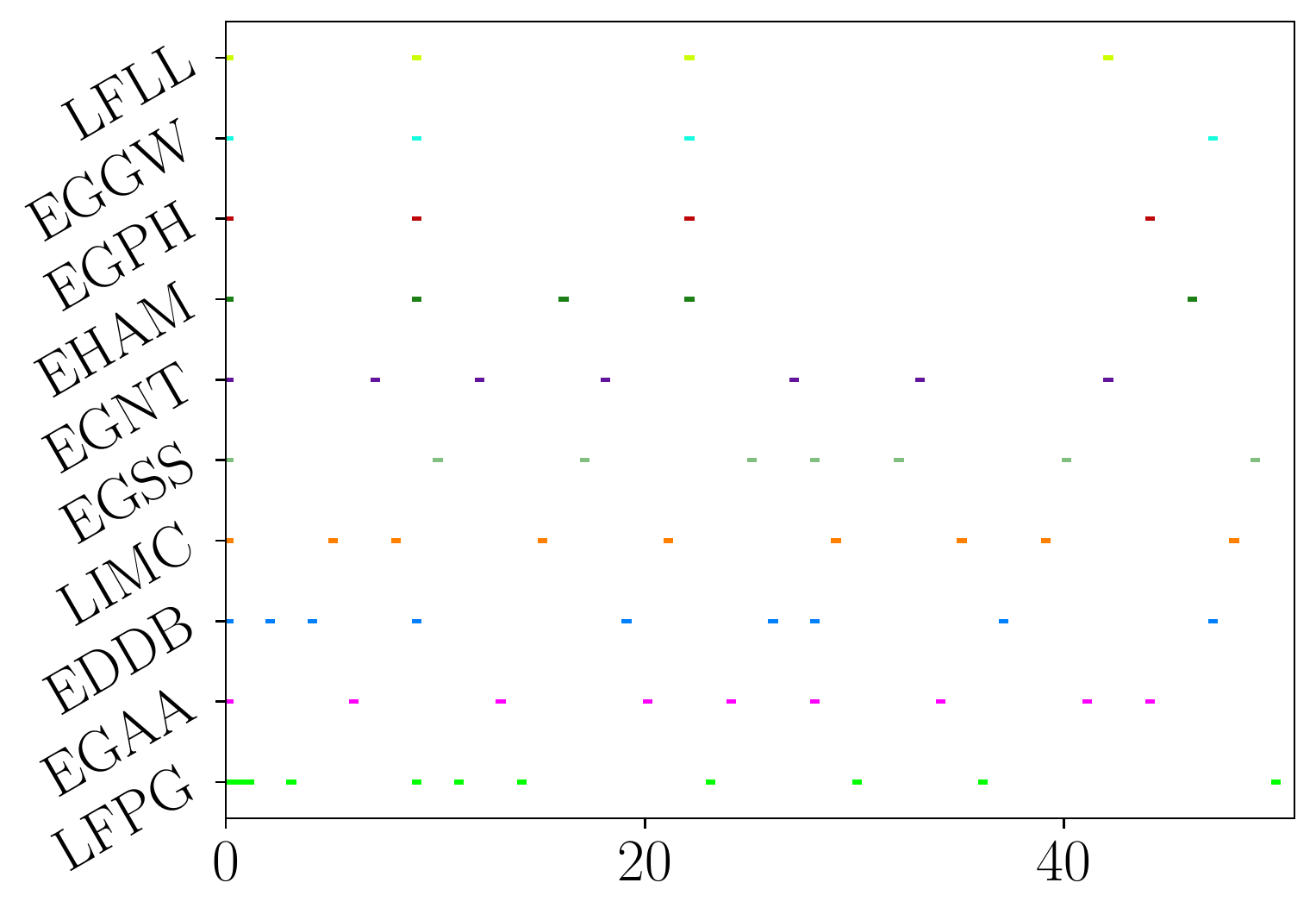}  

\end{tabular}
\caption{{\bf Spectral persistent cross-hubs.}  The spectral persistence bar codes of the six diffusion bicomplexes of the European ATN multiplex. The nodes represent European airports labelled with their ICAO codes. The most persistent cross-hubs correspond to the airports that provide the most efficient correspondences from the first airline network to the second.}
\label{fig:atn-persistence-SPB}
\end{figure*}

\section{Experiments on multiplex networks}
\paragraph*{\bf Diffusion CSBs.}
Let $\cal M$ be a multiplex formed by $M$ graphs $\Gamma^s = (E_s,V), s=1,\ldots,M$. Denoting the vertex set $V$ as an ordered set $\{1,2,\ldots, N$\}, we will write $v^s_i$ to represent the node $i$ in the graph $\Gamma^s$, following the same notations we have used for multicomplexes. 

For every pair of distinct indices $s,t$, we define the $2$--dimensional CSB $\DiffBicomp{X}{s}{t}$ on $V\times V$ such that
$\DiffBicomp{X}{s}{t}_{k,-1} = \emptyset$ for $k\ge 1$,  $\DiffBicomp{X}{s}{t}_{-1,k}$ is the $2$--clique complex of the layer indexed by $t$ in the multiplex $\cal M$; a pair $(v_i^s, v_j^t)\in V\times V$, forms a cross-edge if $i<j$, and nodes $i$ and $j$ are connected in $\Gamma^s$; and
 a $(0,1)$--crossimplex is a triple $(v_i^s, v_j^t, v_k^t)\in V^3$ such that $i$ is connected to $j$ and $k$ in $\Gamma^s$, and $j$ and $k$ are connected in $\Gamma^t$, while  $\DiffBicomp{X}{s}{t}_{1,0}=\emptyset$. We call $\DiffBicomp{X}{s}{t}$ the {\em diffusion bicomplex} of (layer) $s$ onto $t$. Notice that by construction, the $(0,0)$--cross-Laplacians of $\DiffBicomp{X}{s}{t}$ are indexed over $E_s$, while the $(0,0)$--cross-Laplacians of $\DiffBicomp{X}{t}{s}$ are indexed over $E_t$. This shows that $\DiffBicomp{X}{s}{t}$ and $\DiffBicomp{X}{t}{s}$ are not the same. In fact, the diffusion bicomplex $\DiffBicomp{X}{s}{t}$ is a way to look at the the topology of $\Gamma^s$ through the topology of $\Gamma^t$; or put differently, it diffuses the topology of the former into the topology of the latter.

\paragraph*{\bf Cross-hubs in air transportation networks.}

We use a subset of the European Air Transportation Network (ATN) dataset from~\cite{cardillo2013emergence} to construct a $3$--layered multiplex $\cal M$ on $450$ nodes 
each representing a European airport~\cite{wu2019tensor}. The $3$ layer networks $\Gamma^1, \Gamma^2$, and $\Gamma^3$ of $\cal M$ represent the direct flights served by Lufthansa, Ryanair, and Easyjet airlines, respectively; that is, intra-layer edges correspond to direct flights between airports served by the corresponding airline. Considering the respective bottom $(0,0)$--cross-Laplacians of the six diffusion bicomplexes $\DiffBicomp{X}{1}{2}$, $\DiffBicomp{X}{1}{3}$, $\DiffBicomp{X}{2}{1}$, $\DiffBicomp{X}{3}{1}$, $\DiffBicomp{X}{2}{3}$, and $\DiffBicomp{X}{3}{2}$, we obtain the spectral persistence bar codes describing the emergence of SCH's for each airline \wrt the others (see Fig.~\ref{fig:atn-persistence-SPB}). The induced SCH rankings are presented in Table~\ref{tab:EATN-rankings}, while the corresponding PCHs are illustrated in Fig.~\ref{fig:atn-SPB}.

\begin{figure*}[!ht]
\begin{tabular}{cc}
Lufthansa to Ryanair & Lufthansa to Easyjet \\
 \includegraphics[width=0.42\textwidth]{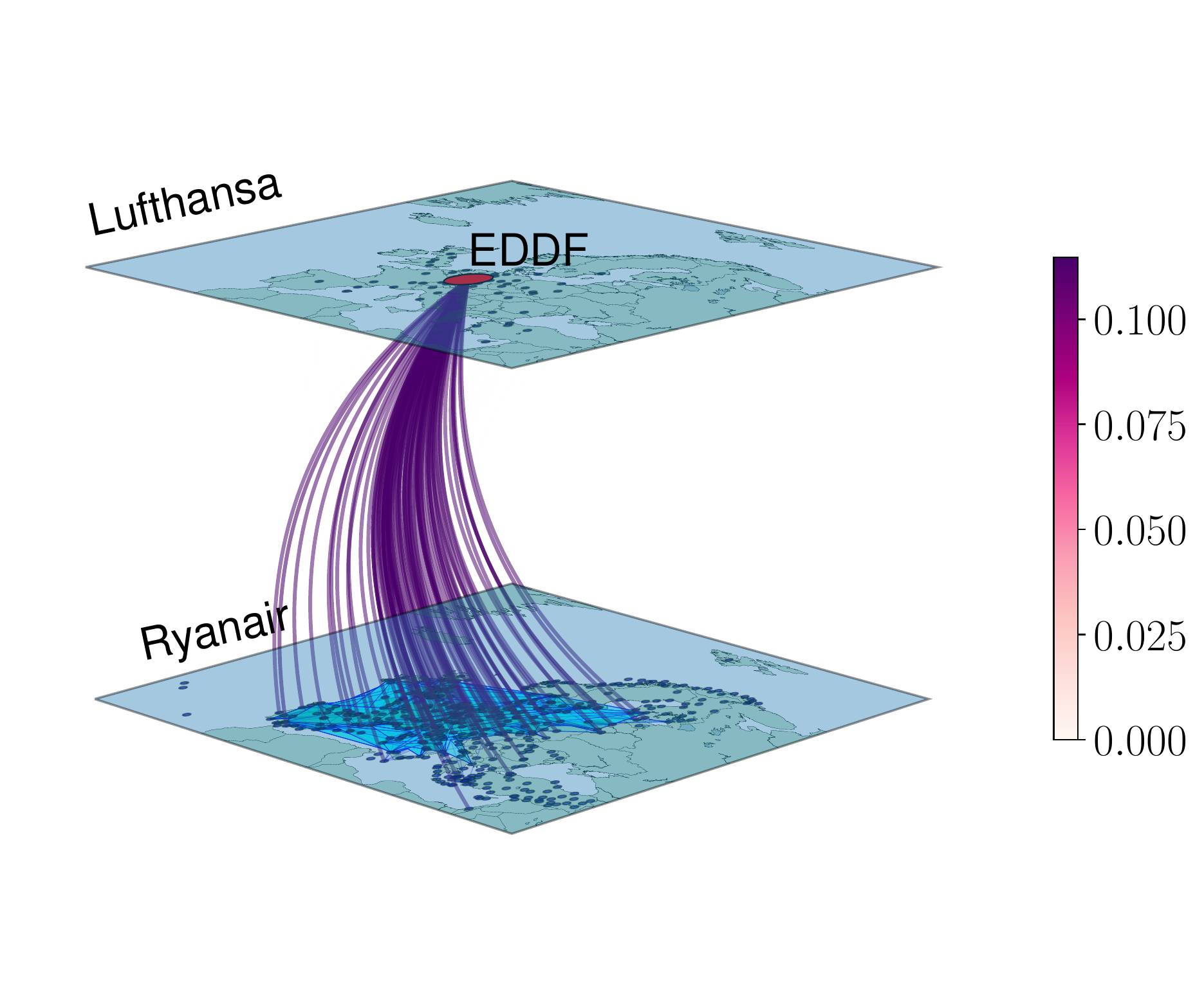} 
 &
  \includegraphics[width=0.42\textwidth]{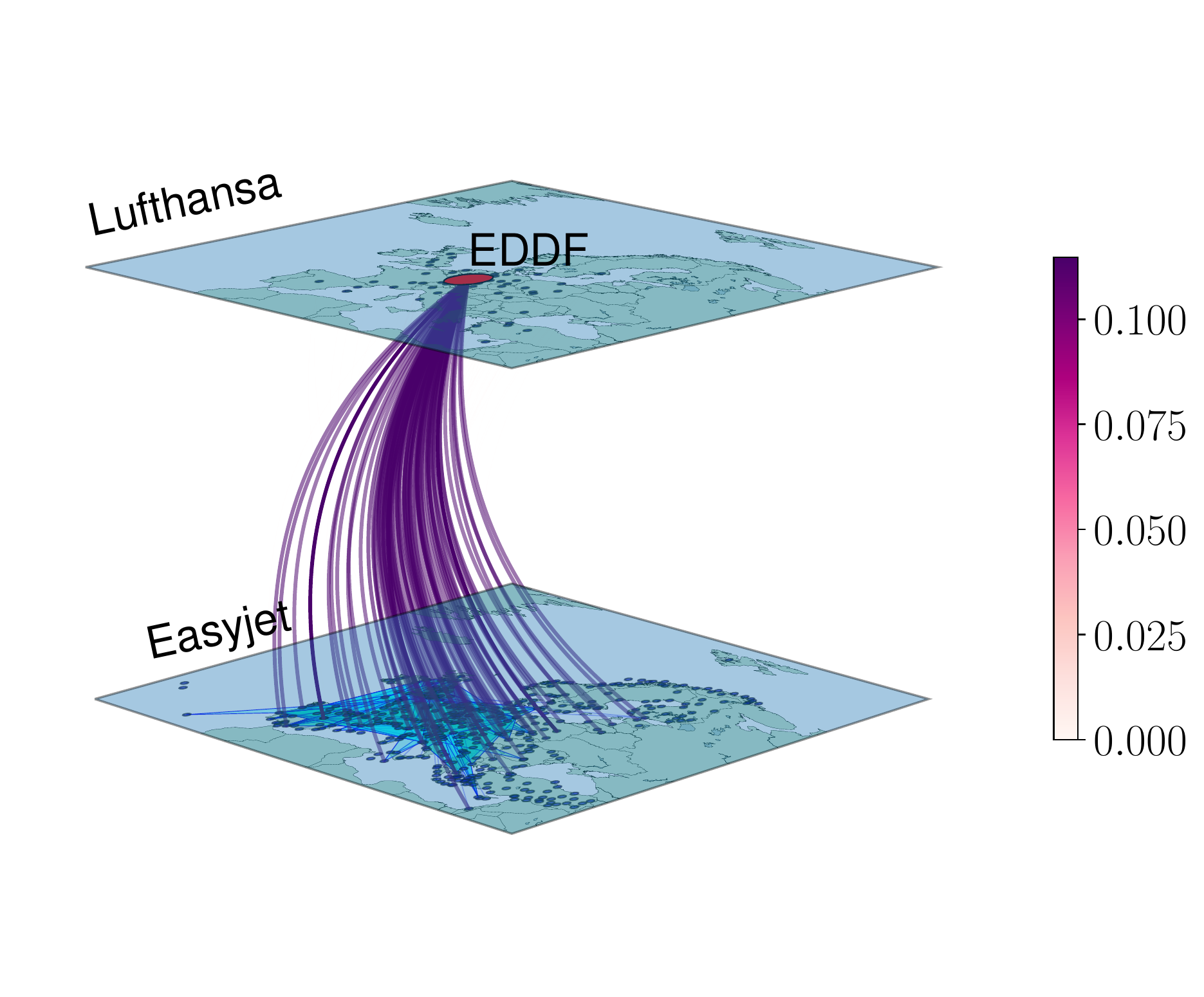}  \\
  Ryanair to Lufthansa & Rynair to Easyjet \\
    \includegraphics[width=0.42\textwidth]{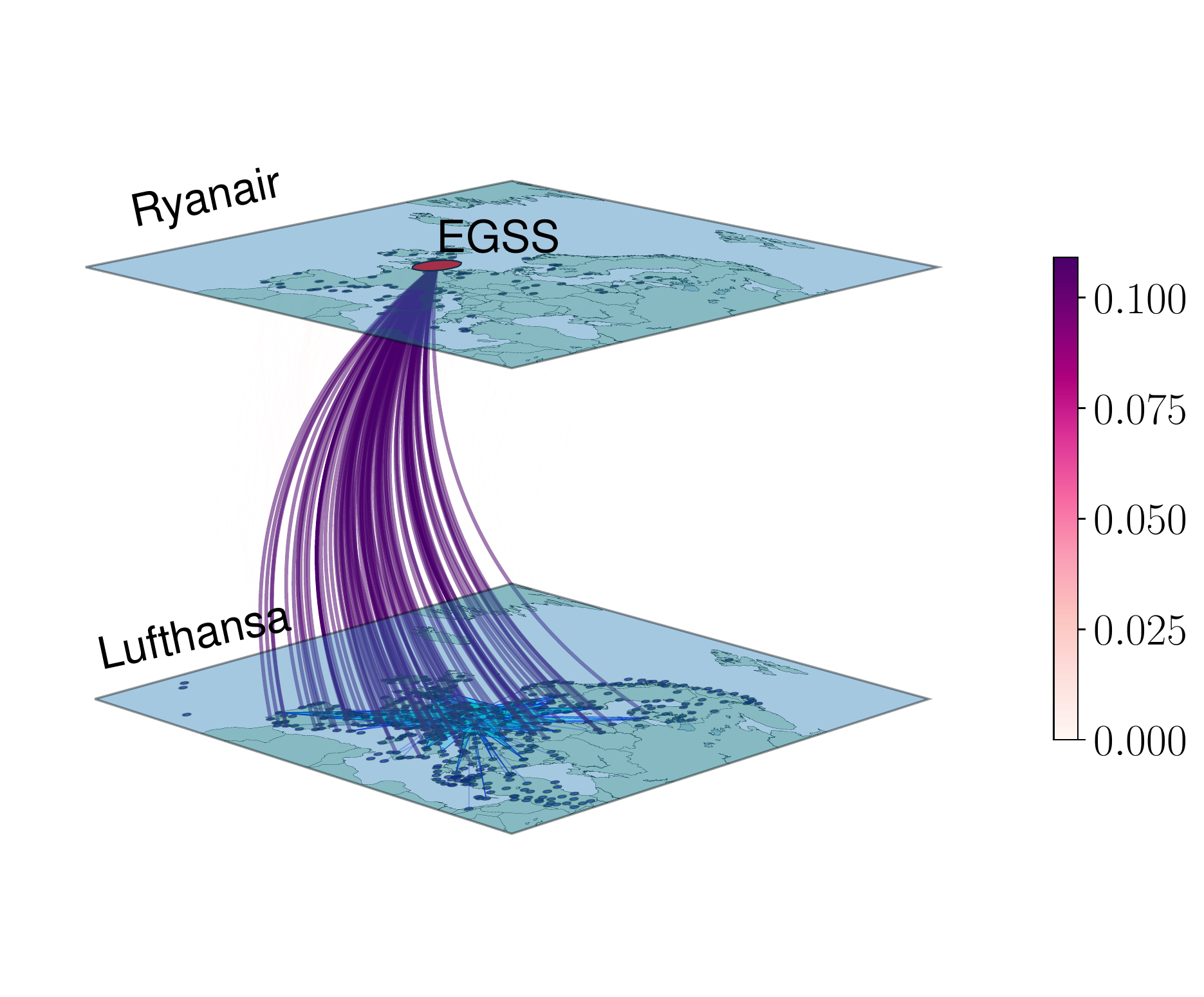}  
    &
    \includegraphics[width=0.42\textwidth]{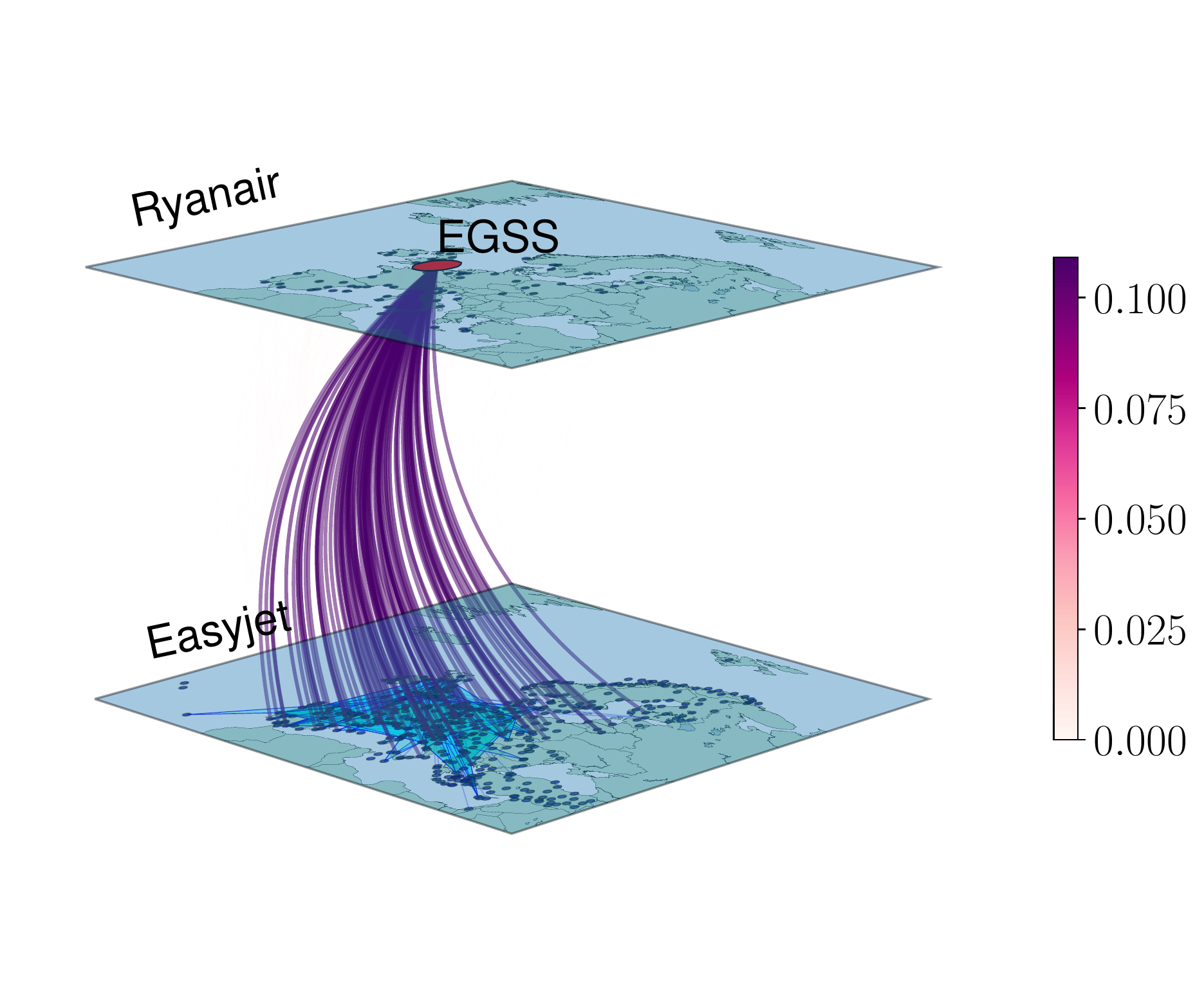} \\
 Easyjet to Lufthansa & Easyjet to Ryanair \\ 
  \includegraphics[width=0.42\textwidth]{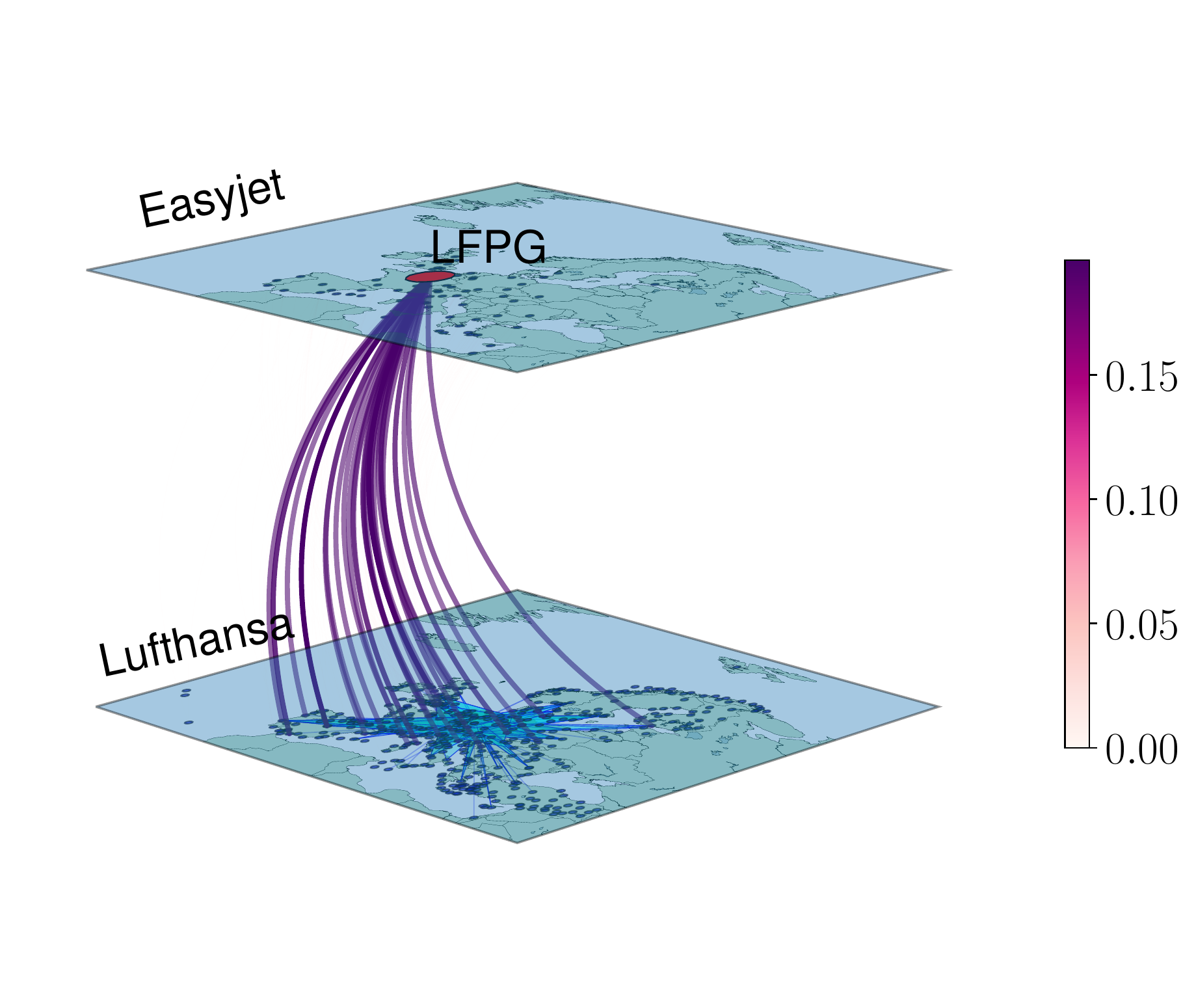}  
  &
    \includegraphics[width=0.42\textwidth]{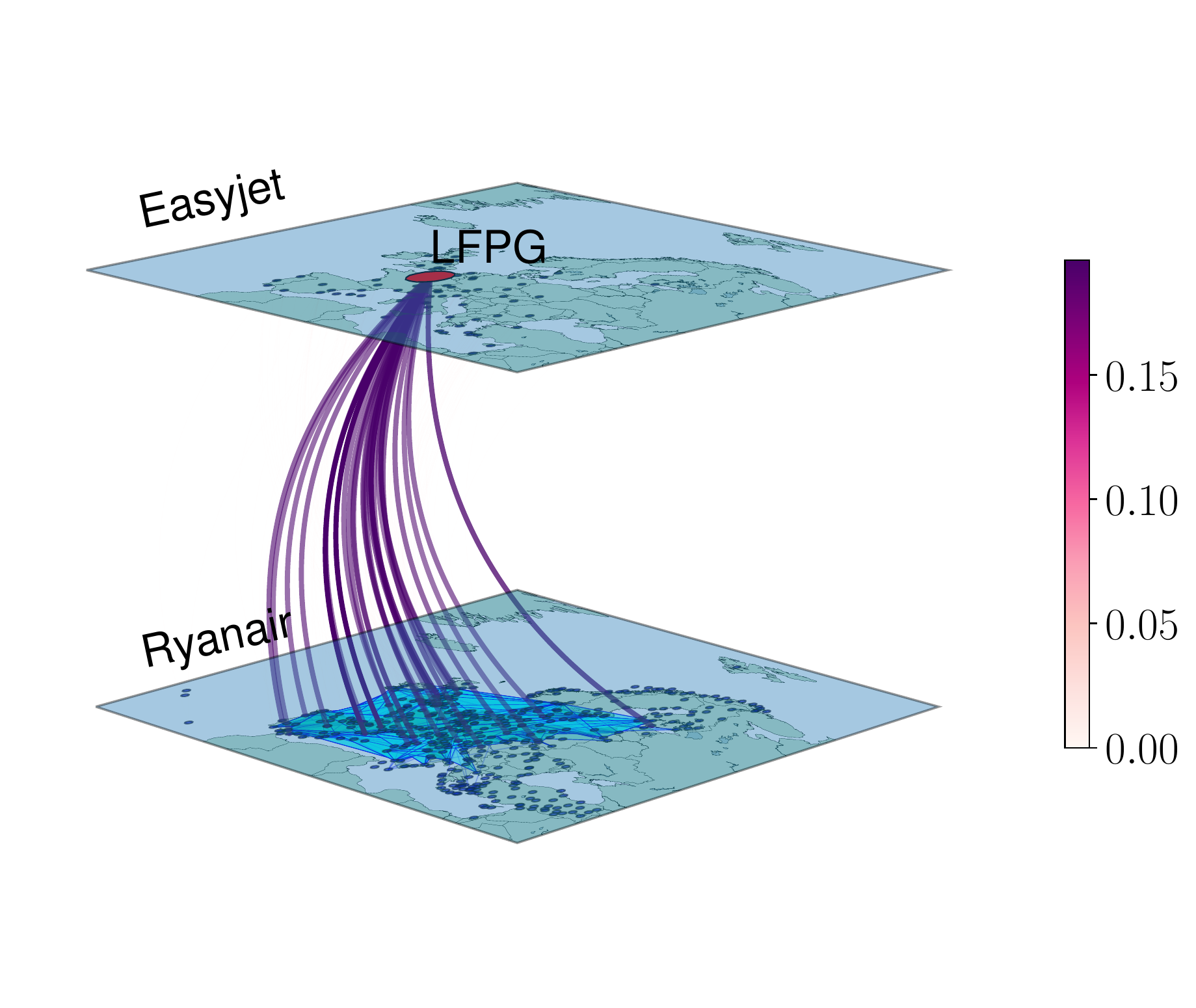}  

\end{tabular}
\caption{{\bf PCHs of the diffusion bicomplexes for the European ATN multiplex.}  The nodes represent airports labelled with their ICAO codes.}
\label{fig:atn-SPB}
\end{figure*}

\begin{table*}[!ht]
\centering
\begin{tabular}{cc}
{\bf 1. $\DiffBicomp{X}{Lufthansa}{Ryanair}$}
 & {\bf  2. $\DiffBicomp{X}{Lufthansa}{Easyjet}$} \\ 
\begin{tabular}{|c|c|}
\hline 
Airports & Rank \\
\hline 
Frankfurt Airport & 1 \\
Munich Airport & 2 \\
D\"usseldorf Airport & 3 \\
Stuttgart Airport & 4 \\
Larnaca Airport & 5 \\
Leipzig Halle Airport & 5 \\
Geneva Airport & 5 \\
Athens Airport & 5 \\
Amsterdam Airport Schiphol & 5 \\
Birmingham Airport & 5 \\
\hline
\end{tabular}
&
\begin{tabular}{|c|c|}
\hline 
Airports & Rank \\
\hline 
Frankfurt Airport & 1 \\
Munich Airport & 2 \\
Milan Malpensa Airport & 3 \\
Düsseldorf Airport & 4 \\
Stuttgart Airport & 5 \\
Larnaca Airport & 6 \\
Leipzig Halle Airport & 6 \\
Geneva Airport & 6 \\
Athens Airport & 6 \\
Amsterdam Airport Schiphol & 6 \\
Birmingham Airport & 6 \\
\hline
\end{tabular}

\\
& \\
{\bf 3. $\DiffBicomp{X}{Ryanair}{Lufthansa}$} & {\bf 4. $\DiffBicomp{X}{Ryanair}{Easyjet}$} \\
\begin{tabular}{|c|c|}
\hline 
Airports & Rank \\
\hline 
London Stansted Airport & 1 \\
Bergamo Airport & 2 \\
Dublin Airport & 3 \\
Charleroi Airport & 4 \\
Paris Beauvais Airport & 5 \\
Porto Airport & 6 \\
Tampere Airport & 7 \\
Kaunas Airport & 8 \\
Zaragoza Airport & 9 \\
G\"oteborg City Airport & 10 \\
\hline
\end{tabular}
&
\begin{tabular}{|c|c|}
\hline 
Airports & Rank \\
\hline 
Bergamo Airport & 1 \\
Dublin Airport & 2 \\
London Stansted Airport & 3 \\
Madrid Barajas Airport & 4 \\
Rome Ciampino Airpor & 5 \\ 
Palma de Mallorca Airport	 & 6 \\
Ibiza Airpor & 7 \\
Bologna Airport	 & 8 \\
Girona Airport & 9 \\
Moss Airport Rygge & 10 \\
\hline
\end{tabular}
\\
& \\
{\bf 5. $\DiffBicomp{X}{Easyjet}{Lufthansa}$} & {\bf 6. $\DiffBicomp{X}{Easyjet}{Ryanair}$} \\
\begin{tabular}{|c|c|}
\hline 
Airports & Rank \\
\hline 
Paris Charles de Gaulle Airport	 & 1 \\
London Stansted Airport & 2 \\
Berlin Brandenburg Airport & 3 \\
London Luton Airport	 & 4 \\
Amsterdam Airport Schiphol & 5 \\
Edinburgh Airport & 6 \\
Manchester Airport  & 7 \\
Rome Fiumicino Airport & 8 \\
Athens Airport & 9 \\
Geneva Airport	& 10\\
\hline
\end{tabular}
&
\begin{tabular}{|c|c|}
\hline 
Airports & Rank \\
\hline 
Paris Charles de Gaulle Airport	 & 1 \\
London Stansted Airport & 2 \\
Milan Malpensa Airport & 3 \\
Berlin Brandenburg Airport & 4 \\
Belfast International Airport & 5 \\
London Luton Airport	 & 6 \\
Amsterdam Airport Schiphol &  7\\
Edinburgh Airport & 8 \\
Newcastle Airport & 9 \\
Lyon-Saint Exupéry Airport & 10 \\
\hline
\end{tabular}

\end{tabular}
\label{tab:EATN-rankings}
\caption{Ranking of the ten most persistent SCHs for the diffusion bicomplexes associated to the European air transportation multiplex network.}

\end{table*}

\section{Discussion and conclusions}
We have introduced CSM as a generalization of both the notions of simplicial complexes and multilayer networks. We further introduced cross-homology  to study their topology and defined the cross--Laplacian operators to detect more structures that are not detected by homology. Our goal here was to set up a mathematical foundation for studying higher-order multilayer complex systems. Nevertheless, through synthetic examples of CSM and applications to multiplex networks, we have shown that our framework provides powerful tools to revealing important topological features in a multilayer networks and address questions that would not arise from the standard pairwise-based formalism of multilayer networks. We put a special focus on the $(0,0)$--cross-Laplacians to show how their spectra quantify the extent to which nodes in one layer restructure the topology of other layers in a multilayer network. Indeed, given a CSB $X$ or even a $2$--layered network, we defined $\Lap{T}{0,0}$ and $\Lap{B}{0,0}$ as two self-adjoint positive operators operators that allow to look at the topology of one layer through the lens of the other layer. Specifically, we saw that their spectra allow to detect nodes from one layer that serve as interlayer connecting hubs for clusters in the other layer; we referred to such nodes as spectral cross-hubs (SCHs). Such hubs vary in function of the eigenvalues of the cross--Laplacians, the notion of {\em spectral persistent cross-hubs} was used to rank them according to their frequency along the spectra. The SCHs obtained from the largest eigenvalues were referred here as {\em principal cross-hubs} (PCHs) as they are the ones that interconnects the most important structures of the other layer. We should note that a PCH is not necessarily spectrally persistent, and two SCHs can be equally persistent but at different ranges of the spectrum. This means that, depending on the applications, some choices need to be made when ranking SCHs based on their spectral persistence. Indeed, it might be the case that two SCHs persist equally longer enough to be considered as the most persistent ones, but that one persists through the first quarter of the spectrum while the other persists through the second quarter of the spectrum, so that none of them is a PCH.
For instance, in the example of the European ATN multiplex networks, when two nodes were equally persistent, we ranked higher the one that came later along the spectrum.
Finally, one can observe that the topological and geometric interpretations given for these operators can be generalized to the higher-order $(k,l)$--cross-Laplacians as well. That is, the spectra of these operators encode the extent to which higher-order topological structures (edges, triangles, tetrahedrons, and so on) control the emergence of higher-order clustering structures in the other layers.

\begin{acknowledgments}
This work was supported by the Natural Sciences and Engineering Research Council of Canada through the CRC grant NC0981.
\end{acknowledgments}

\appendix{

\section{Description of the $(0,0)$--cross-Betti vectors}




\paragraph*{\bf Cones and kites.}

Let $v_j^2$ be a fixed vertex in $V_2$. A {\em kite from $V_1$ to $v_j^2$} is an ordered tuple $(v^1_{i_1}, \ldots, v^1_{i_p})$ of vertices in $V_1$ such that $\{v^1_{i_r}, v^1_{i_{r+1}}, v^2_j\}\in X_{1,0}$ for $r=1, \ldots, p-1$. Such an object is denoted as $(v^1_{i_0}, \ldots, v^1_{i_p} \leftarrow v_j^2)$. Beware that the vertices $v^1_{i_1}, \ldots, v^1_{i_p}$ do not need to be pair-wise connected in $V_1$. What we have here are cross-triangles all pointing to $v^2_j$ that are pieced together in the form of an actual kite as in Figure~\ref{fig:kites-and-cones}. In particular, if $v_j^2$ is the bottom face of a $(1,0)$--cross-triangle $[v_i^1,v_k^1;v_j^2]$, then $(v^1_i,v_k^1\leftarrow v_j^2)$ is a kite. If $(v^1_{i_1}, \ldots, v^1_{i_p}\leftarrow v_j^2)$ is a kite, its {\em boundary} is the triple $(v^1_{i_1}, v_{i_p}^1, v_j^2)\in V_1^2\times V_2$. Similarly, given a fixed vertex $v_i^1\in V_1$, one can define a {\em kite from $V_2$ to $v_i^1$} by a tuple $(v_{j_1}^2, \ldots, v_{j_{p'}}^2)$ of vertices in $V_2$ satisfying analogous conditions. Such a kite will be denoted as $(v_i^1\rightarrow v_{j_1}^2, \ldots, v_{j_{p'}}^2)$.

It is worth noting that if $(v^1_{i_1}, \ldots, v^1_{i_p})$ is a kite from $V_1$ to $v_j^2$, then so is each tuple $(v^1_{i_r}, v^1_{i_{r+1}}, \ldots, v^1_{i_{r+q}})$ with $1\le r$ and $r+q \le p$.


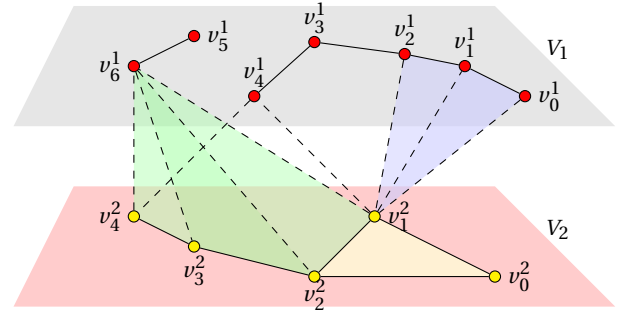
\begin{figure}[!ht]
\centering
\begin{tikzpicture}[scale=.8]

\begin{scope}[on background layer,opacity=0.5]
\fill[gray!20] (0,0) -- (10,0) -- (8,2) node[midway,above, black] {$ \ V_1$} -- (1,2)  -- cycle;
\fill[red!20] (0,-3) -- (10,-3) -- (8,-1)  node[midway,above, black] {$ \ V_2$} -- (1,-1) -- cycle;
\end{scope}

\node[vertex] (u0) at (8.5,0.5) {};
\node at (8.9,0.5) {$v^1_0$};
\node[vertex] (u1) at (7.5,1) {};
\node at (7.5,1.4) {$v^1_1$};
\node[vertex] (u2) at (6.5,1.2) {};
\node at (6.5,1.6) {$v^1_2$};
\node[vertex] (u3) at (5,1.4) {};
\node at (5,1.8) {$v^1_3$};
\node[vertex] (u4) at (4,0.5) {};
\node at (4,0.9) {$v^1_4$};
\node[vertex] (u5) at (3,1.5) {};
\node at (3.4,1.5) {$v^1_5$};
\node[vertex] (u6) at (2, 1) {};
\node at (1.6, 1) {$v^1_6$};

\node[vertex, fill=yellow] (v0) at (8,-2.5) {};
\node at (8.4, -2.5) {$v^2_0$};
\node[vertex, fill=yellow] (v1) at (6,-1.5) {};
\node at (6.4, -1.5) {$v^2_1$};
\node[vertex, fill=yellow] (v2) at (5,-2.5) {};
\node at (5,-2.8) {$v^2_2$};
\node[vertex, fill=yellow] (v3) at (3,-2) {};
\node at (3, -2.4) {$v^2_3$};
\node[vertex, fill=yellow] (v4) at (2, -1.5) {};
\node at (1.6, -1.5) {$v^2_4$};

\draw (u0) -- (u1) -- (u2) -- (u3) -- (u4);
\draw (u5) -- (u6);
\draw (v2) -- (v1) -- (v0) -- (v2) -- (v3) -- (v4);

\draw[dashed] (u0) -- (v1) -- (u1);
\draw[dashed] (u2) -- (v1) -- (u4);
\draw[dashed] (u4) -- (v4) -- (u6) -- (v3);
\draw[dashed] (v1) -- (u6) -- (v2);

\begin{scope}[on background layer]
\fill[blue!20, opacity=0.6] (u0.center) -- (v1.center) -- (u1.center) -- cycle;
\fill[blue!20, opacity=0.5] (u1.center) -- (v1.center) -- (u2.center) -- cycle;
\fill[green!30, opacity=0.5] (u6.center) -- (v3.center) -- (v4.center) -- cycle;
\fill[green!30, opacity=0.5] (u6.center) -- (v1.center) -- (v2.center) -- cycle;
\fill[green!40, opacity=0.5] (u6.center) -- (v2.center) -- (v3.center) -- cycle;
\fill[yellow!20, opacity=0.8] (v0.center) -- (v1.center) -- (v2.center) -- cycle;
\end{scope}

\end{tikzpicture}
\caption{A $2$--dimensional crossimplicial bicomplex containing kites and cones. Indeed $(v^1_0,v^1_1,v^1_2)$ is kite from $V1$ to $v^2_1\in V_2$ with boundary $(v^1_0,v^1_2,v^2_1)\in V_1^2\times V_2$ and $(v^2_1, v^2_2,v^2_3,v^2_4)$ is a kite from $V_2$ to $v^1_6\in V_1$ with boundary $(v^1_6,v^2_1,v^2_4)\in V_1\times\in V_2^2$. The tuples $(v^2_1,v^2_2, v^2_3), (v^2_1,v^2_2)$, and $(v^2_2,v^2_3,v^2_4)$ are also kites from $V_2$ to $v^1_6$. Furthermore, there are $3$ cones with bases in $V_1$: $(v^1_2,v^1_4,v^2_1)$ is a closed cone with base $(v^1_2,v^1_4)\in V_1$ and vertex $v^2_1\in V_2$, and $(v^1_4,v^1_6,v^2_1)$ is an open cone with base $(v^1_4,v^1_6)\in V_1^2$ and vertex $v^2_1\in V_2$. Also, $(v^1_4,v^1_6,v^2_4)$ is an open cone with base $(v^1_4,v^1_6)\in V_1^2$ and vertex $v^2_4\in V_2$; and $(v^2_1,v^2_4,v^1_4)\in V_2^2\times V_1$ is a closed cone with base $(v^2_1,v^2_4)\in V_2^2$ and vertex $v^1_4\in V_1$. It follows from Theorem~\ref{thm:cross-Betti} that $\bettiVect{0,0}=(3,1)$.}
\label{fig:kites-and-cones}
\end{figure}

By a {\em cross-chain} on a kite we mean one that is a linear combination of the triangles composing the kite; that is, a cross-chain on the kite $(v^1_{i_1}, \ldots, v^1_{i_p}\leftarrow v_j^2)$ is an element $a\in C_{1,0}(X)$ of the form 
\begin{eqnarray}\label{eq:kite-cross-chain}
a = \sum_{r=1}^{p-1}\g_r[v_{i_r}^1,v_{i_{r+1}}^1; v_j^2],
\end{eqnarray}
where $\g_1, \ldots, \g_{p-1}\in \bb R$. In a similar fashion, cross-chains on a kite of the form $(v_i^1\rightarrow v_{j_1}^2, \ldots, v_{j_{p'}}^2)$ are defined.

Now, given a pair $(v^1_i, v^1_k)$ of vertices in the layer $V_1$ and the vertex $v_j^2\in V_2$, we say that the triple $(v_i^1,v^1_k, v_j^2)\in V_1^2\times V_2$ is a {\em cone with base $(v_i^1,v_k^1)$ and vertex $v_j^2$} if it satisfies the following conditions:
\begin{itemize}
  \item $v_i^1\sim v_j^2$ and $v_k^1\sim v_j^2$; {\em i.e.}, $[v_i^1;v_j^2], [v_k^1;v_j^2] \in X_{0,0}$;
  \item the triple $(v_i^1,v_k^1,v_j^2)\in V_1^2\times V_2$ is not the boundary of a kite from $V_1$ to $v_j^2$.
\end{itemize}
We also say that $(v_i^1,v_k^1,v_j^2)$ is a {\em cone with base in $V_1$ and vertex in $V_2$}. In a similar fashion one defines a cone with base in $V_2$ and vertex in $V_1$. We refer to Figure~\ref{fig:kites-and-cones} for examples of cones.

An immediate consequence of a triple $(v_i^1,v_k^1,v_j^2)\in V_1^2\times V_2$ being a cone is that the vertices $\{v_i^1, v_k^1, v_j^2\}$ is not a $(1,0)$--crossimplex. The vertices $v_i^1$ and $v_k^1$  might however be connected by a {\em horizontal path} of some length; by which we mean that there might be a sequence of vertices $v_{i_0}^1, \ldots, v_{i_p}^1$ in $V_1$ not all of which form cross-triangles with $v_j^2$ and such that 
\begin{eqnarray*}
v_i^1\outAdj{1}{} v_{i_0}^1 \outAdj{1}{} \cdots \outAdj{1}{} v^1_{i_p} \outAdj{1}{} v_k^1,
\end{eqnarray*}
in which case the cone is said to be {\em closed}; it is called {\em open} otherwise. 

Cones in a crossimplicial bicomplex are classified by the top and bottom $(0,0)$--cross-homology groups of the bicomplex. Specifically, we have the following topological interpretation of $\hH{0,0}(X)$, $\vH{0,0}(X)$, and hence, the $(0,0)$-cross-Betti numbers.

\begin{thm}\label{thm:cross-Betti}
The $(0,0)$--cross-homology group $\hH{0,0}(X)$ (resp. $\vH{0,0}(X)$) is generated by the cross-homology classes of cones with bases in $V_1$ and vertices in $V_2$ (resp. with bases in $V_2$ and vertices in $V_1$). Therefore, the $(0,0)$--cross-Betti number $\betti{t}{0,0}$ counts the generating cones with bases in $V_t, t=1,2$.
\end{thm}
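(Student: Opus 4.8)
The plan is to reduce the computation of $\hH{0,0}(X)$ to the reduced homology of a family of auxiliary graphs indexed by the bottom vertices, and then to recognise cones through the connectivity of those graphs; I carry out the top case, the bottom case $\vH{0,0}(X)$ being identical after interchanging $V_1$ and $V_2$ (so $\hbd{}\leftrightarrow\vbd{}$ and $X_{1,0}\leftrightarrow X_{0,1}$). First I would grade every chain group by its bottom vertex. Because the top cross-boundary never alters the $V_2$-part of a crossimplex, both $\hbd{0,0}\colon C_{0,0}\to C_{-1,0}$ and $\hbd{1,0}\colon C_{1,0}\to C_{0,0}$ preserve the decompositions $C_{0,0}=\bigoplus_{v_j^2}C_{0,0}^{(j)}$ and $C_{1,0}=\bigoplus_{v_j^2}C_{1,0}^{(j)}$ into blocks of fixed bottom vertex. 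Since $\hbd{0,0}$ carries each cross-edge $[v^1;v_j^2]$ to the single generator $[v_j^2]$, distinct blocks land in independent lines of $C_{-1,0}$; the kernel and the image therefore split compatibly, and $\hH{0,0}(X)\cong\bigoplus_{v_j^2}\hH{0,0}^{(j)}$, where each summand is the homology of the short complex $C_{1,0}^{(j)}\xrightarrow{\hbd{1,0}} C_{0,0}^{(j)}\xrightarrow{\hbd{0,0}}\mathbb{R}$.

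Next I would identify each summand with a reduced graph homology. For a fixed $v_j^2\in V_2$, let $G_j$ be the \emph{link graph} whose vertices are the top nodes $v^1$ with $[v^1;v_j^2]\in X_{0,0}$ and whose edges are the pairs $\{v_i^1,v_k^1\}$ with $[v_i^1,v_k^1;v_j^2]\in X_{1,0}$; closure under crossfaces guarantees that the endpoints of each edge are already vertices. Under the evident identifications $C_{0,0}^{(j)}\cong C_0(G_j)$ and $C_{1,0}^{(j)}\cong C_1(G_j)$, the restriction of $\hbd{1,0}$ is exactly the graph boundary while the restriction of $\hbd{0,0}$ becomes the augmentation $\varepsilon\colon C_0(G_j)\to\mathbb{R}$. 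Hence $\hH{0,0}^{(j)}\cong\widetilde{\mathrm H}_0(G_j)$, which is free of rank $c_j-1$, where $c_j$ is the number of connected components of $G_j$ (and the summand vanishes when $v_j^2$ supports no cross-edge).

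Finally I would set up the dictionary between cones and components. By definition a kite from $V_1$ to $v_j^2$ is a sequence of top vertices whose consecutive pairs form cross-triangles with $v_j^2$, that is, precisely a walk in $G_j$, and its boundary records the two endpoints of that walk; consequently a triple $(v_i^1,v_k^1,v_j^2)$ is a cone exactly when $v_i^1$ and $v_k^1$ are vertices of $G_j$ lying in \emph{distinct} components. To such a cone one attaches the cycle $[v_k^1;v_j^2]-[v_i^1;v_j^2]\in\ker\hbd{0,0}$, whose class corresponds to $[v_k^1]-[v_i^1]\in\widetilde{\mathrm H}_0(G_j)$. Since $\widetilde{\mathrm H}_0(G_j)$ is generated by the differences $[w_r]-[w_0]$ of representatives of its components relative to a chosen base component, and each such difference is itself a cone class, the cone classes generate $\hH{0,0}^{(j)}$, and a subfamily of $c_j-1$ of them forms a basis. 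Summing over all bottom vertices shows that the classes of cones with bases in $V_1$ and vertices in $V_2$ generate $\hH{0,0}(X)$ and that $\betti{1}{0,0}=\sum_{v_j^2}(c_j-1)$ counts the cones in such a basis.

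The main obstacle I anticipate is purely one of \emph{interpretation} at the last step: the family of all cones is highly redundant, since cones sharing the same pair of components of $G_j$ are cross-homologous, so the phrase ``$\betti{1}{0,0}$ counts the generating cones'' must be read as the rank $c_j-1$ contributed by each bottom vertex, i.e. the size of a basis extracted from the generating set of cone classes. Once the link-graph identification is in place this is nothing more than the standard fact that $\widetilde{\mathrm H}_0$ of a graph is freely generated by differences of component representatives, and the remaining verifications (orientation signs, the direct-sum decomposition, and the kite/walk correspondence) are routine.
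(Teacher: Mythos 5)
Your proof is correct, and it reaches the theorem by a genuinely cleaner route than the paper. The paper works directly with an arbitrary cycle $b'\in\ker\hbd{0,0}$: it groups the coefficients by bottom vertex, derives the vanishing-sum conditions~\eqref{eq:cross-cycles-coefficients}, and then characterizes $b'\in\im\hbd{1,0}$ by exhibiting a permutation $\tau_r$ arranging each group into a kite, with an explicit telescoping choice of coefficients $\g_{m_{r,j}}$ for the preimage chain. You instead observe once and for all that $\hbd{1,0}$ and $\hbd{0,0}$ preserve the grading of $C_{1,0}$ and $C_{0,0}$ by the bottom vertex, split the complex as $\bigoplus_{v_j^2}\bigl(C_{1,0}^{(j)}\to C_{0,0}^{(j)}\to\bb R\bigr)$, and identify each block with the reduced chain complex of the link graph $G_j$, so that $\hH{0,0}(X)\cong\bigoplus_j\widetilde{\mathrm H}_0(G_j)$; kites become walks, cones become pairs of vertices in distinct components, and the theorem reduces to the standard fact that $\widetilde{\mathrm H}_0$ of a graph is generated by differences of component representatives. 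What your formalization buys is worth noting: it yields the sharper quantitative statement $\betti{1}{0,0}=\sum_{j}(c_j-1)$ together with an explicit basis of cone classes, which makes precise the paper's loose phrase ``counts the generating cones'' (a point you rightly flag, since cones sharing the same pair of components are cross-homologous); and it silently repairs two small imprecisions in the paper's argument --- the permutation/kite criterion is only correct if one allows padding the support with zero-coefficient vertices (a walk witnessing triviality may pass through vertices absent from $b'$), and the paper's claim that \emph{every} combination~\eqref{eq:cone-cross-cycle} over a chain of cones with $\sum_r\a_r=0$ has non-trivial class is false as stated (e.g.\ vertices alternating between two components of $G_j$ with coefficients $1,-1,-1,1$ give the zero class); in your picture non-triviality is simply the condition that the coefficient sum within some component of $G_j$ is non-zero. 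What the paper's approach buys in exchange is an explicit, self-contained construction of the bounding chain on a kite, which your argument delegates to the graph-homology black box.
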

Here, by the cross-homology class of the cone $(v_i^1,v_k^1,v_j^2)\in V_1^2\times V_2$, for instance, we mean the top cross-homology of the $(0,0)$--cross-chain $[v_k^1;v_j^2]-[v_i^1;v_j^2]\in C_{0,0}(X)$. 

\begin{proof}
We prove the theorem for $\hH{0,0}(X)$ since the same arguments apply to $\vH{0,0}(X)$. Every cone $(v_i^1,v_k^1,v_j^2)$ defines a non-trivial $(0,0)$--cross-cycle; namely, the difference of the corresponding cross-edges $[v_i^1;v_j^2]-[v_k^1;v_j^2]\in \ker \hbd{0,0}$. More generally, suppose we are given $p$ cones $(v_{i_1}^1,v_{i_2}^1,v_j^2), (v_{i_2},v_{i_3}^1,v_j^2), \ldots, (v_{i_{p-1}}^1,v_{i_p}^1,v_j^2)$ with bases in $V_1$ and all with the same vertex $v_j^2\in V_2$. Then, for all real numbers $\a_1, \ldots, \a_p$ such that $\sum_{r=1}^p\a_r=0$, the cross-chain 
\begin{eqnarray}\label{eq:cone-cross-cycle}
  b=\sum_{r=1}^p\a_r[v_{i_r}^1;v_j^2]
\end{eqnarray}
is clearly a $(0,0)$--cross-cycle with non-trivial cross-homology class; {\em i.e.}, $b\in \ker\hbd{0,0}$ and $b\notin \im\hbd{1,0}$. Conversely, let $b'\in \ker\hbd{0,0}$. We can write
\[
b'=\sum_{m=1}^M\a_m'[v_{i_m}^1;v_{i_m}^2]\in C_{0,0}(X),
\] 
so that $\hbd{0,0}(b')=\sum_{m=1}^M\a'_m[v_{i_m}^2]=0$. Then, either all the $v_{i_m}^2$'s are pair-wise different, in which case $b'$ is the trivial cross-cycle; or there exist $p+1$ subsets $(\{m_{r,1}, \ldots, m_{r, M_r}\})_{r=1}^{p+1}$ of $\{1, \ldots, M\}$ such that 
\[
v_{i_{m_{r,1}}}^2 = v_{i_{m_{r,2}}}^2 = \cdots = v^2_{i_{m_{r, M_r}}}, \ {\rm for\ } 1\le r \le p, 
\]
and 
\[
v_{i_{m_{p+1,j}}}^2\neq v_{i_{m_{p+1,j'}}}^2, \ {\rm for\ all\ } j\neq j', 1\le j,j' \le M_{p+1}.
\]
It follows that 
\begin{eqnarray}\label{eq:cross-cycles-coefficients}
  \sum_{j=1}^{M_r}\a'_{m_{r,j}}=0, \ {\rm for\ each\ } r=1, \ldots, p,
\end{eqnarray}
and $\a'_{m_{p+1,j}} = 0 \ {\rm for\ all\ } j=1, \ldots, M_{p+1}$. Hence, we get the following general expression of a $(0,0)$--cross-cycle:
\begin{eqnarray}\label{eq:cross-cycles-expression}
b'=\sum_{r=1}^p\sum_{j=1}^{M_r}\a'_{m_{r,j}}[v_{i_{m_{r,j}}}^1;v_{i_{m_{r,1}}}^2],
\end{eqnarray}
where the coefficients satisfy~\eqref{eq:cross-cycles-coefficients}. Furthermore, it is straightforward to see that $b'\in \im\hbd{1,0}$ if and only if for each $r=1, \ldots, p$, there exists a permutation $\tau_r$ of $\{1, \ldots, M_r\}$ such that 
\[
(v_{i_{m_{r, \tau_r(1)}}}^1, \ldots, v_{i_{m_{r,\tau_r(M_r)}}}^1 \leftarrow v_{i_{m_{r,1}}}^2)
\]
is a kite. In that case, we get $b'=\hbd{1,0}(a)$ where 
\begin{eqnarray}\label{eq:trivial-cross-cycle}
a=\sum_{r=1}^p\sum_{r=1}^{M_r-1} \g_{m_{r,j}}[v_{i_{m_{r,\tau_r(j)}}}^1, v_{i_{m_{r,\tau_r(j+1)}}}^1;v_{i_{m_{r,1}}}^2],
\end{eqnarray}
and where for $r=1, \ldots, p$, the coefficients $\g_{m_{r,j}}$ are given by
\begin{eqnarray}
  \left\{
  \begin{array}{ll}
    \g_{m_{r,1}} & = -\a'_{m_{r,\tau_r(1)}} \\
    \g_{m_{r,2}} & = -\a'_{m_{r,\tau_r(1)}} -\a'_{m_{r,\tau_r(2)}}
    \\
     & \vdots \\
     \g_{m_{r,M_r-2}} & = -\a'_{m_{r,\tau_r(1)}} -\a'_{m_{r,\tau_r(2)}} - \cdots - \a'_{m_{r,\tau_r(M_r-2)}}\\
     \g_{m_{r,M_r-1}} & = \a'_{m_{r,\tau_r(M_2)}}
  \end{array}
  \right.
\end{eqnarray}
This shows that trivial cross-homology classes in $\hH{0,0}(X)$ are given by cross-cycles obtained from cross-chains on kites; that is images of sums of cross-chains in the form of~\eqref{eq:kite-cross-chain}.
\end{proof}

\label{ap:cones}

\section{Hodge Theory}
\subsection{Harmonic cross-forms as cross-homology classes}\label{ap:harmonic}
Let
  \[
    \Lap{TO}{k,l} := (\hcobd{k,l})^*\hcobd{k,l}, \ \Lap{TI}{k,l} := \hcobd{k-1,l}(\hcobd{k-1,l})^*,
  \]
and
  \[
   \Lap{BO}{k,l} := (\vcobd{k,l})^*\vcobd{k,l}, \ \Lap{BI}{k,l} := \vcobd{k,l-1}(\vcobd{k,l-1})^*,
  \]
so that 
  \[
   \Lap{T}{k,l} := \Lap{TO}{k,l} + \Lap{TI}{k,l};
  \]
and 
  \[
   \Lap{B}{k,l} := \Lap{BO}{k,l} + \Lap{BI}{k,l}.
  \]

We denote the spaces of harmonic cross--forms on $X$ as
\[
 \har{s}{k,l}=\ker\Lap{s}{k,l}= \{\phi \in C^{k,l} | \Lap{s}{k,l}\phi = 0\}, s=T,B
\]

Then, we have the following group isomorphisms generalizing~\cite{eckmann1944, horak2013spectra}. 
\begin{lem}
For $s=1,2$ and for all  $k,l\ge -1$, we have
\begin{eqnarray}\label{eq:isom-cohom-harmonic}
\xH{s}{k,l}(X) \cong \ker (\Lap{s}{k,l}),
\end{eqnarray}
where we have used the notations $\tLap{k,l} = \Lap{T}{k,l}$ and $\bLap{k,l}=\Lap{B}{k,l}$. 
\end{lem}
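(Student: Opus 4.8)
The plan is to carry out the standard finite-dimensional Hodge argument of Eckmann~\cite{eckmann1944}, adapted to the weighted cochain complexes built from the cross-coboundary operators; I treat the top case $s=1$ in detail, the bottom case $s=2$ being identical after exchanging the roles of the two layers. Fix $l\ge -1$ and consider the cochain complex $\bigl(C^{\bullet,l},\hcobd{\bullet,l}\bigr)$, where the relation $\hcobd{k,l}\hcobd{k-1,l}=0$ is obtained by dualizing $\hbd{k-1,l}\hbd{k,l}=0$. The weighted inner product~\eqref{eq:weight-inner-product} makes each $C^{k,l}$ a finite-dimensional real inner-product space, so the adjoints $(\hcobd{k,l})^*$ exist and are given explicitly by~\eqref{eq:dual-coboundary-formula}.

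First I would record the elementary but crucial identity obtained by pairing the two summands of $\Lap{T}{k,l}$ against a form $\phi\in C^{k,l}$:
\begin{equation*}
\inner{\Lap{T}{k,l}\phi,\phi}{k,l} = \|\hcobd{k,l}\phi\|^2 + \|(\hcobd{k-1,l})^*\phi\|^2 .
\end{equation*}
Since both terms on the right are non-negative, $\Lap{T}{k,l}\phi=0$ holds if and only if $\phi\in\ker\hcobd{k,l}\cap\ker(\hcobd{k-1,l})^*$. This identifies the harmonic space $\thar{k,l}=\ker\Lap{T}{k,l}$ intrinsically and, in particular, shows $\thar{k,l}\subseteq\ker\hcobd{k,l}$.

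Next I would establish the orthogonal Hodge decomposition. Because $\hcobd{k,l}\hcobd{k-1,l}=0$, the subspaces $\im\hcobd{k-1,l}$ and $\im(\hcobd{k,l})^*$ are orthogonal; combining this with the self-adjoint splitting $C^{k,l}=\ker\Lap{T}{k,l}\oplus\im\Lap{T}{k,l}$ and the identity $(\ker\hcobd{k,l})^{\perp}=\im(\hcobd{k,l})^*$ yields
\begin{equation*}
C^{k,l} = \im\hcobd{k-1,l}\ \oplus\ \thar{k,l}\ \oplus\ \im(\hcobd{k,l})^* ,
\end{equation*}
together with $\ker\hcobd{k,l}=\im\hcobd{k-1,l}\oplus\thar{k,l}$. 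Consequently the composite $\thar{k,l}\hookrightarrow\ker\hcobd{k,l}\twoheadrightarrow\ker\hcobd{k,l}/\im\hcobd{k-1,l}$ is an isomorphism; that is, the harmonic forms represent the cochain cohomology $\hcoh{k,l}$ exactly once each.

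Finally I would identify this cohomology with the cross-homology. By its very definition~\eqref{eq:boundaries}, $\hcobd{k,l}$ is the transpose of the boundary map $\hbd{k+1,l}\colon C_{k+1,l}\rTo C_{k,l}$ with respect to the canonical evaluation pairing between $C^{k,l}$ and $C_{k,l}$, since $\hcobd{k,l}\phi=\phi\circ\hbd{k+1,l}$. Hence $\bigl(C^{\bullet,l},\hcobd{}\bigr)$ is precisely the dual complex of $\bigl(C_{\bullet,l},\hbd{}\bigr)$. Over the field $\bb R$ dualization is exact, whence $\hcoh{k,l}\cong\bigl(\hH{k,l}(X)\bigr)^*$, and finite-dimensionality gives $\bigl(\hH{k,l}(X)\bigr)^*\cong\hH{k,l}(X)$. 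Chaining the isomorphisms produces $\hH{k,l}(X)\cong\thar{k,l}=\ker\Lap{T}{k,l}$, as claimed. The main point requiring care is exactly this last step: one must keep the canonical pairing (used for the transpose and the homology/cohomology duality) conceptually separate from the weighted inner product (used to form the adjoints $(\cdot)^*$ and the Laplacian). The weights render the elementary cross-forms orthogonal but not orthonormal — this is precisely the origin of the factors $w(a')/w(a)$ in~\eqref{eq:dual-coboundary-formula} — yet they affect neither the dimension of $\ker\Lap{T}{k,l}$ nor the matching of cochain cohomology with chain homology; everything else is formal linear algebra valid in any finite-dimensional inner-product setting.
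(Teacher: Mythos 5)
Your proof is correct and follows essentially the same route as the paper: the standard finite-dimensional Hodge argument of Eckmann, identifying $\ker \Lap{T}{k,l}$ with $\ker\hcobd{k,l}\cap\ker(\hcobd{k-1,l})^*$ (you via the quadratic-form identity $\inner{\Lap{T}{k,l}\phi,\phi}{k,l}=\|\hcobd{k,l}\phi\|^2+\|(\hcobd{k-1,l})^*\phi\|^2$, the paper via the lemma $\ker(f^*f)=\ker f$ together with inclusion chains) and then with the quotient $\ker\hcobd{k,l}/\im\hcobd{k-1,l}$ through the orthogonal decomposition. If anything, your final step --- explicitly using $\hcobd{k,l}\phi=\phi\circ\hbd{k+1,l}$, exactness of dualization over $\bb R$, and finite-dimensionality to pass from cochain cohomology to the cross-homology group $\hH{k,l}(X)$, while keeping the canonical pairing distinct from the weighted inner product --- is more careful than the paper, which compresses this into the bare assertion that the identification $C_{k,l}=C^{k,l}$ yields $\hH{k,l}(X)=\ker(\hcobd{k,l})/\im(\hcobd{k-1,l})$.
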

\begin{proof}
Let's prove this result for $s=1$ (similar arguments apply to $s=2$). First, notice that from the identification $C_{k,l}=C^{k,l}$ we obtain
\begin{eqnarray}\label{eq:cohom-decomposition}
\hH{k,l}(X)=\ker(\hcobd{k,l})/\im(\hcobd{k-1,l}) \cong \ker(\hcobd{k,l})\cap \im(\hcobd{k-1,l})^\bot,
\end{eqnarray}
and as the analog holds for $\vcoh{k,l}(X)$. Moreover, recall from linear algebra that if $E\overset{f}{\rTo}F$ is a linear operator on two vector spaces equipped with inner products, then $\ker(f^*f)=\ker f$. Indeed, we clearly have $\ker f\subset \ker(f^*f)$. Next, if $x\in \ker(f^*f)$, then $\inner{f^*fx,y}{E}=\inner{fx,fy}{F}=0$ for all $y\in E$, which implies that $x\in \ker f$. In our case we have $\hcobd{k,l}\hcobd{k-1,l}=0$ and $(\hcobd{k-1,l})^*(\hcobd{k,l})^*=0$; hence
\[
\begin{aligned}
\im(\Lap{TO}{k,l}) \subset \im(\hcobd{k,l})^* \subset \ker(\hcobd{k-1,l})^*\subset \ker(\hcobd{k-1,l}(\hcobd{k-1,l})^*) \\
\im(\Lap{TI}{k,l}) \subset \im(\hcobd{k-1,l}) \subset \ker(\hcobd{k,l}) \subset \ker((\hcobd{k,l})^*\hcobd{k,l}).
\end{aligned}
\]
Therefore
\[
\begin{array}{lcl}
  \ker\Lap{T}{k,l} & = & \ker((\hcobd{k,l})^*\hcobd{k,l}) \cap \ker(\hcobd{k-1,l}(\hcobd{k-1,l})^*) \\
  & = & \ker(\hcobd{k,l})\cap \ker(\hcobd{k-1,l})^* \\
  & \cong & \ker(\hcobd{k,l})\cap \im(\hcobd{k-1,l})^\bot,
\end{array}
\]
and the isomorphism~\eqref{eq:isom-cohom-harmonic} follows from~\eqref{eq:cohom-decomposition}. 
\end{proof}

It follows that the eigenvectors corresponding to the zero eigenvalue of the $(k,l)$--cross-Laplacian $\Lap{s}{k,l}$ are representative cross-cycles in the homology group $\xH{s}{k,l}(X)$. Henceforth, we see that in order to get the dimensions of the cross-homology groups $\xH{s}{k,l}(X)$, it suffices to find the eigenspaces corresponding to the zero eigenvalues of $\Lap{s}{k,l}$. That is, 

\begin{eqnarray}\label{eq:betti-and-harmonic}
  \betti{1}{k,l} = \dim \ker\Lap{1}{k,l}, \ {\rm and\ } \betti{2}{k,l} = \dim \ker\Lap{2}{k,l}.
\end{eqnarray}

\label{ap:harmonic}
\subsection{Matrix representations of the cross--Laplacians}\label{ap:matrices}
In order to compute these matrix representations of the $(k,l)$--cross-Laplacians, we first need to give their formal expressions as linear operators. Thanks to~\eqref{eq:dual-coboundary-formula}, we get for $\phi\in C^{k,l}$ and $a\in X_{k,l}$:

\begin{widetext}
\begin{align*}
(\hcobd{k,l})^*\hcobd{k,l}\phi([a]) & = \sum_{\overset{a'\in X_{k+1,l}}{a\in \hbd{}a'}}\frac{w(a')}{w(a)}\sgn(a,\hbd{}a)(\hcobd{k,l}\phi)([a']) \\
 & = \sum_{\overset{a'\in X_{k+1,l}}{a\in \hbd{}a'}}\frac{w(a')}{w(a)}\sgn(a,\hbd{}a')\sum_{b\in \hbd{}a'}\sgn(b,\hbd{}a')\phi([b]) \\
 & =  \sum_{\overset{a'\in X_{k+1,l}}{a\in \hbd{}a'}}\frac{w(a')}{w(a)}\sgn(a,\hcobd{}a')\left[\sgn(a,\hbd{}a')\phi([a]) +\sum_{b\in \hbd{}a', b\neq a}\sgn(b,\hbd{}a')\phi([b]) \right] \\
 & = \sum_{\overset{a'\in X_{k+1,l}}{a\in \hbd{}a'}}\frac{w(a')}{w(a)}\left[\phi([a]) + \sum_{b\in \hbd{}a', a\neq b}\sgn(a,\hbd{}a')\sgn(b,\hbd{}a')\phi([b])\right],
\end{align*}
and
\begin{align*}
  \hcobd{k-1,l}(\hcobd{k-1,l})^*\phi([a]) & = \sum_{\overset{c\in X_{k-1,l}}{c\in \hbd{}a}}\sgn(c,\hbd{}a')\sum_{\overset{a'\in X_{k,l}}{c\in \hbd{}a'}}\frac{w(a')}{w(c)}\sgn(c,\hbd{}a')\phi([a']) \\
  & = \sum_{\overset{c\in X_{k-1,l}}{c\in \hbd{}a}}\sgn(c,\hbd{}a)\left[\frac{w(a)}{w(c)}\sgn(c,\hbd{}a)\phi([a]) \right.  \\
  & + \left. \sum_{\overset{c\in \hbd{}a'}{a\neq a'}}\frac{w(a')}{w(c)}\sgn(c,\hbd{}a')\phi([a'])\right] \\
  & = \sum_{\overset{c\in X_{k-1,l}}{c\in \hbd{}a}}\frac{w(a)}{w(c)}\phi([a]) + \sum_{\overset{c\in X_{k-1,l}, a'\in X_{k,l}}{c=\hbd{}a'\cap\hbd{}a}}\frac{w(a')}{w(c)}\sgn(c,\hbd{}a)\sgn(c,\hbd{}a')\phi([a']).
\end{align*}
\end{widetext}
In particular, when $\phi$ is an elementary cross-form $e_b, b\in X_{k,l}$, we get

\begin{widetext}
\begin{eqnarray*}
\Lap{TO}{k,l}e_b([a]) = \left\{
\begin{array}{ll}
\frac{1}{w(a)}\deg_{TO}(a), & {\rm if\ } a=b, \\
 & \\
-\frac{w(c)}{w(a)}, & {\rm if \ } a\neq b \ {\rm and \ } a\outAdj{1}{c}b, \\ 
& {\rm and\ } \sgn(a)=\sgn(b), \\  
& \\ 
\frac{w(c)}{w(a)}, & {\rm if \ } a\neq b \ {\rm and \ } a\outAdj{1}{c}b, \\ 
& {\rm and\ } \sgn(a)=-\sgn(b), \\
 & \\
0, & {\rm otherwise\ }, 
\end{array}\right.
 \ {\rm and \ }
  \Lap{TI}{k,l}e_b([a]) = \left\{
  \begin{array}{ll} 
  w(a)\deg_{TI}(a), & {\rm if\ } a=b, \\
   & \\
  \frac{w(b)}{w(d)}, & {\rm if\ } a\neq b \ {\rm and\ } a\inAdj{1}{d}b,\\
   & {\rm and\ } \sgn(a)=\sgn(b),\\ 
   & \\ 
   - \frac{w(b)}{w(d)}, & {\rm if\ } a\neq b \ {\rm and\ } a\inAdj{1}{d}b \\ 
   & {\rm and\ } \sgn(a) = -\sgn(b),\\ 
 & \\
0, & {\rm otherwise\ }.
   \end{array}\right.
\end{eqnarray*}
\end{widetext}

It follows that the $(a,b)$-th entry of the matrix representation of the top $(k,l)$--cross-Laplacian $\Lap{T}{k,l}$ with respect to the inner product defined from the weight $w$ on $X$ is given by
\begin{widetext}
\begin{eqnarray}\label{eq:matrix-representation}
  (\Lap{T}{k,l})_{a,b} = \left\{
  \begin{array}{l}
    \frac{1}{w(a)}\deg_{TO}(a)+w(a)\deg_{TI}(a), \ {\rm if\ } a=b, \\
     \\
     \begin{array}{ll}
    \frac{w(b)}{w(d)} - \frac{w(c)}{w(a)} , &  {\rm if \ } a\neq b, \ a\outAdj{1}{c}b, \ a\inAdj{1}{d}b, \\
         & {\rm and\ } \sgn(a)=\sgn(b),\\
      \end{array}
        \\ 
       \begin{array}{ll}
     \frac{w(c)}{w(a)} - \frac{w(b)}{w(d)}, &  {\rm if \ } a\neq b, \  a\outAdj{1}{c}b,\ a\inAdj{1}{d}b, \\
     & {\rm and\ } \sgn(a)=-\sgn(b),  \\
       \end{array}
        \\
       \begin{array}{ll}
       \frac{w(b)}{w(d)}, &  {\rm if\ } a\neq b, \ a\inAdj{1}{d}b, \ \sgn(a)=\sgn(b), \\  
       & {\rm and\ not \ top-outer \ adjacent}, \\ 
       \end{array}
        \\
        \begin{array}{ll}
       -\frac{w(b)}{w(d)}, & {\rm if\ } a\neq b, \ a\inAdj{1}{d}b, \ \sgn(a)=-\sgn(b), \\  & {\rm and\ not \ top-outer \ adjacent}, \\
       \end{array} 
        \\
       0, \ {\rm otherwise\ }.  
  \end{array}
  \right.
\end{eqnarray}
\end{widetext}
And it is clear that we get similar matrix representation for the bottom $(k,l)$--cross-Laplacian $\Lap{B}{k,l}$. 

\label{ap:matrices}

}

\bibliographystyle{abbrv} 

\end{document}